\rm \setlength{\textwidth}{160mm}
\newtheorem{theorem}{Theorem}
\newtheorem{lemma}[theorem]{Lemma}
\newtheorem{definition}{Definition}
\newtheorem{corollary}[theorem]{Corollary}
\newtheorem{conjecture}{Conjecture }
\newtheorem{claim}{Claim}
\title{\bf The $3$-rainbow index and connected dominating sets\footnote{Supported by NSFC No.11371205 and
PCSIRT.}}
\author{\small Qingqiong Cai, Xueliang Li, Yan Zhao\\
{\small Center for Combinatorics and LPMC-TJKLC}\\ {\small Nankai
University}\\ {\small Tianjin 300071, China}\\ {\small Email:
cqqnjnu620@163.com, lxl@nankai.edu.cn, zhaoyan2010@mail.nankai.edu.cn
}}
\date{}
\begin{document}
\maketitle

\begin{abstract}
A tree in an edge-colored graph is said to be rainbow if no two
edges on the tree share the same color. An edge-coloring of $G$ is
called 3-rainbow if for any three vertices in $G$, there exists a
rainbow tree connecting them. The 3-rainbow index $rx_3(G)$ of $G$
is defined as the minimum number of colors that are needed in a
3-rainbow coloring of $G$. This concept, introduced by Chartrand et
al., can be viewed as a generalization of the rainbow connection. In
this paper, we study the 3-rainbow index by using connected
three-way dominating sets and 3-dominating sets. We shown that for
every connected graph $G$ on $n$ vertices with minimum degree at
least $\delta$ ($3\leq\delta\leq5$), $rx_{3}(G)\leq
\frac{3n}{\delta+1}+4$, and the bound is tight up to an additive
constant; whereas for every connected graph $G$ on $n$ vertices with
minimum degree at least $\delta$ ($\delta\geq3$), we get that
$rx_{3}(G)\leq n\frac{ln(\delta+1)}{\delta+1}(1+o_{\delta}(1))+5$.
In addition, we obtain some tight upper bounds of the 3-rainbow
index for some special graph classes, including threshold graphs,
chain graphs and interval graphs.
\end{abstract}

\noindent{\bf Keywords:} $3$-rainbow index, connected dominating
sets, rainbow paths

\noindent{\bf AMS subject classification 2010:} 05C15, 05C38, 05C69.

\section {\large Introduction}

All graphs in this paper are undirected, finite and simple. We
follow \cite{Bondy} for graph theoretical notation and terminology
not described here. Let $G$ be a nontrivial connected graph with an
\emph{edge-coloring} $c: E(G)\rightarrow\{1, 2,\cdots, t\}, t \in
\mathbb{N}$, where adjacent edges may be colored the same. A path is
said to be a \emph{rainbow path} if no two edges on the path have
the same color. An edge-colored graph $G$ is called \emph{rainbow
connected} if for every pair of distinct vertices of $G$ there
exists a rainbow path connecting them. The \emph{rainbow connection
number} of $G$, denoted by $rc(G)$, is defined as the minimum number
of colors that are needed in order to make $G$ rainbow connected.
The \emph{rainbow $k$-connectivity} of $G$, denoted by $rc_{k}(G)$,
is defined as the minimum number of colors in an edge-coloring of
$G$ such that every two distinct vertices of $G$ are connected by
$k$ internally disjoint rainbow paths. These concepts were
introduced by Chartrand et al. in \cite{ChartrandGP, Chartrand1}.
Recently, there have been published a lot of results on the rainbow
connections. The interested readers can see \cite{LiSun1, LiSun2}
for a survey on this topic.

The $(k,\ell)$-rainbow index was also introduced by Chartrand et al.
in \cite{Chartrand2}, which can be viewed as a generalization of the
rainbow connection and rainbow connectivity. We call a tree $T$ of
an edge-colored graph $G$ a \emph{rainbow tree} if no two edges of
$T$ have the same color. For $S\subseteq V(G)$, a \emph{rainbow
$S$-tree} is a rainbow tree connecting the vertices of $S$. Suppose
that $\{T_{1},T_{2},\cdots, T_{\ell}\}$ is a set of rainbow
$S$-trees. They are called \emph{internally disjoint} if
$E(T_{i})\cap E(T_{j})=\emptyset$ and $V(T_{i})\bigcap V(T_{j})=S$
for every pair of distinct integers $i,j$ with $1\leq i,j\leq \ell$
(Note that these trees are vertex-disjoint in $G\setminus S$). Given
two positive integers $k$, $\ell$ with $k\geq 2$, the
\emph{$(k,\ell)$-rainbow index} $rx_{k,\ell}(G)$ of $G$ is the
minimum number of colors needed in an edge-coloring of $G$ such that
for any set $S$ of $k$ vertices of $G$, there exist $\ell$
internally disjoint rainbow $S$-trees. In particular, for $\ell=1$,
we often write $rx_{k}(G)$ rather than $rx_{k,1}(G)$ and call it the
\emph{$k$-rainbow index}. An edge-coloring of $G$ is called a
\emph{$k$-rainbow coloring} if for any set $S$ of $k$ vertices of
$G$, there exists a rainbow $S$-tree. A simple result for the
$k$-rainbow index \cite{Chartrand2} is that $k-1\leq rx_k(G)\leq
n-1$. It is easy to see that $rx_{2,\ell}(G)=rc_{\ell}(G)$. In the
sequel, we always assume $k\geq 3$.  We refer to
\cite{Cai,Cai2,Cai3,Chen,Li3,Liu} for more details about the
$(k,\ell)$-rainbow index.

Computing the rainbow connection number of a graph is NP-hard
\cite{Chakraborty}, so is computing the $(k,\ell)$-rainbow index.
For this reason, one of the most important goals for studying
rainbow connection number and rainbow index is to obtain good upper
and lower bounds. In the search toward good upper bounds, an idea
that turned out to be successful more than once is considering the
``strengthened" connected dominating set: find a suitable
edge-coloring of the induced graph on such a set, and then extend it
to the whole graph using a constant number of additional colors.

Given a graph $G$, a set $D\subseteq V(G)$ is called a {\it
dominating set} if every vertex of $V\backslash D$ is adjacent to at
least one vertex of $D$. Further, if the subgraph $G[D]$ of $G$
induced by $D$ is connected, we call $D$ a {\it connected dominating
set} of $G$. The {\it domination number} $\gamma(G)$ is the number
of vertices in a minimum dominating set for $G$. Similarly, the {\it
connected domination number} $\gamma_{c}(G)$ is the number of
vertices in a minimum connected dominating set for $G$.

Let $k$ be a positive integer. A dominating set $D$ of $G$ is called
a {\it $k$-way dominating set} if $d(v)\geq k$ for every vertex
$v\in V\setminus D$. In addition, if $G[D]$ is connected, we call
$D$ a {\it connected $k$-way dominating set}. A set $D\subseteq
V(G)$ is called a {\it $k$-dominating set} of $G$ if every vertex of
$V\backslash D$ is adjacent to at least $k$ distinct vertices of
$D$. Furthermore, if $G[D]$ is connected, we call $D$ a {\it
connected $k$-dominating set}. Obviously, a (connected)
$k$-dominating set is also a (connected) $k$-way dominating set, but
the converse is not true.

There have been several results revealing the close relation between
the dominating sets and the rainbow connection number and rainbow
index.

\begin{theorem}\cite{Chandran}\label{thm1}
If $D$ is a connected two-way dominating set of a connected graph $G$,
then $rc(G)\leq rc(G[D])+3$.
\end{theorem}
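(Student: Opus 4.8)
The plan is to begin with an optimal rainbow coloring of the induced subgraph $G[D]$ and then extend it to all of $G$ using only three additional colors, so that the total number of colors is $rc(G[D])+3$; it then remains to check that every pair of vertices is joined by a rainbow path. Fix a rainbow coloring $c_0$ of $G[D]$ with $rc(G[D])$ colors, and reserve three fresh colors $\alpha,\beta,\gamma$ for the edges meeting $V\setminus D$. Because $D$ is dominating, every vertex $v\in V\setminus D$ has at least one neighbor in $D$; choose one such neighbor $p(v)$ and call $vp(v)$ the \emph{primary edge} of $v$. Because $D$ is two-way dominating, $d(v)\ge 2$, so $v$ has a second incident edge, which provides a \emph{secondary route} into $D$: either a second neighbor in $D$, or a neighbor $v'\in V\setminus D$ through which $v$ reaches $D$ via the path $v\,v'\,p(v')$.

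First I would color the primary and secondary edges with $\alpha,\beta,\gamma$ so that each outer vertex has access to $D$ in two colors. The verification then splits according to how many of the two endpoints lie outside $D$. If both lie in $D$, the rainbow $G[D]$-path provided by $c_0$ works. If exactly one endpoint, say $t$, lies outside, take its primary edge $t\,p(t)$ (a new color) followed by a rainbow $G[D]$-path from $p(t)$ to the other endpoint; since the single new-colored edge cannot repeat an old color, this path is rainbow.

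The essential case is when both endpoints $u,w$ lie outside $D$. The intended path is $u\,p(u)\,[\,\text{rainbow }G[D]\text{-path}\,]\,p(w)\,w$, which is rainbow precisely when the two primary edges receive different colors, the $G[D]$-portion being automatically rainbow and color-disjoint from the new colors. When $c(up(u))=c(wp(w))$ this main route fails, and I would reroute exactly one endpoint, say $u$, through its secondary route $u\,v'\,p(v')$; this replaces one new-colored edge by two, so the path now carries up to three new-colored edges, which is exactly why three colors are required. The construction must guarantee that the two edges of this detour can be colored with the two colors different from $c(wp(w))$, and different from each other.

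The main obstacle is precisely this coloring design for outer vertices having only a single neighbor in $D$: for such a vertex the secondary route necessarily passes through another outer vertex, and I must arrange the colors of the incident edges (the primary edges, the surplus $D$--$(V\setminus D)$ edges, and the edges inside $V\setminus D$) so that a color-disjoint detour avoiding any prescribed single color is always available. Establishing that such a global $3$-coloring exists---so that at most one endpoint ever needs to detour and the detour never clashes with the opposite primary edge---is the crux of the argument; once it is in place, the case analysis above yields a rainbow path for every pair and hence $rc(G)\le rc(G[D])+3$.
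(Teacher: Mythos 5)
Your proposal reproduces the outer skeleton of the argument --- color $G[D]$ optimally, reserve three fresh colors $\alpha,\beta,\gamma$, split into cases by how many endpoints lie outside $D$, and detour one endpoint through a second route when the two primary edges clash --- but it stops exactly where the theorem's content begins. You never construct the global $3$-coloring of the edges meeting $\overline{D}$, and you concede this yourself (``establishing that such a global $3$-coloring exists \ldots is the crux of the argument''). The difficulty you are waving past is real: the color of $v$'s primary edge must be compatible simultaneously with \emph{every} possible partner $w$, and the detour vertex $v'$ is itself an outer vertex whose own leg colors are constrained by its own pairs, so a local or greedy assignment has no way to exclude the clash configuration for some pair. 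As written, the proposal is an outline whose decisive step is an unproven existence claim.

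The missing idea --- the one the present paper attributes to \cite{Chandran} and sketches in the paragraph following Theorem~\ref{thm2} --- is a parity structure on $H=G-D$: take a spanning forest $F$ of $H$, let $(X,Y)$ be the bipartition determined by $F$, and color all edges of $E[X,D]$ with $\alpha$, all edges of $E[Y,D]$ with $\beta$, and the edges between $X$ and $Y$ (in particular those of $F$) with $\gamma$; the edges of $E(H)\setminus E(F)$ joining vertices of the same class are simply ignored. Now the primary-edge color of an outer vertex is a function of its side of the bipartition, and a detour along a forest edge necessarily switches sides and hence switches leg color: if $u$ and $w$ have primary edges of the same color they lie in the same class, and rerouting $u$ through a forest neighbor $v'$ automatically yields the color pattern $\gamma,\beta$ (or $\gamma,\alpha$) against $w$'s $\alpha$ (or $\beta$), plus a rainbow segment inside $G[D]$ in the old colors. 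This is exactly the ``global mechanism'' your plan lacks, and it also shows where the two-way hypothesis is spent: a vertex isolated in $H$ has no forest edge, but $d(v)\geq 2$ forces it to have at least two legs, one of which can be colored $\alpha$ and another $\beta$, restoring the two differently colored accesses to $D$ that the bipartition provides for everyone else. Until you supply this (or an equivalent) construction, your case analysis has nothing to run on.
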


In \cite{Chandran}, the authors employed Theorem \ref{thm1} to get
some tight upper bounds for the rainbow connection number
of many special graph classes, which were otherwise difficult to obtain.

\begin{theorem}\cite{Liu}\label{thm2}
Let $G$ be a connected graph with minimal degree $\delta(G)\geq 3$.
If $D$ is a connected 2-dominating set of $G$, then $rx_{3}(G)\leq
rx_{3}(G[D])+4$ and the bound is tight.
\end{theorem}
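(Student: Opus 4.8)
The plan is to follow the ``colour $G[D]$ first, then extend'' strategy outlined in the introduction, exactly as in the proof of Theorem~\ref{thm1}. I would start from an optimal $3$-rainbow colouring $c_0$ of the induced subgraph $G[D]$, using a palette $C_0$ of $rx_3(G[D])$ colours, and introduce four brand-new colours disjoint from $C_0$. Keep $c_0$ on the edges of $G[D]$, and use only the four new colours on every edge meeting $V\setminus D$ (both the edges from $V\setminus D$ to $D$ and those inside $V\setminus D$). Because $D$ is a dominating set, every vertex outside $D$ is adjacent to $D$, so no vertex of $V\setminus D$ lies far from $D$; because $D$ is $2$-dominating, each such vertex has at least two neighbours in $D$, which is precisely the slack the construction will exploit. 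The guiding principle is that any rainbow tree I build will use old colours only inside $D$ and new colours only on the ``connectors'' reaching $D$, so the two parts can never clash and it suffices to make the connectors rainbow among themselves.

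The verification is organised by $t=|S\cap(V\setminus D)|$ for a given triple $S$. If $t=0$, the $3$-rainbow property of $c_0$ already supplies a rainbow $S$-tree inside $D$. If $t=1$ or $t=2$, I would attach each outside vertex of $S$ to a chosen neighbour in $D$ by a single connector edge, and join the resulting entry point(s) together with the remaining vertices of $S\cap D$ by a rainbow tree in $G[D]$; one then only has to give the one or two connector edges distinct new colours, which is immediate. So at most two new colours are ever needed in these cases, and the preliminary goal is to fix, for every $v\in V\setminus D$, two connector edges to two \emph{distinct} neighbours in $D$ and to colour all these connectors so that the easy cases go through for every choice of $S$.

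The main obstacle is the case $t=3$, i.e.\ $S=\{u,v,w\}\subseteq V\setminus D$. Here the tree consists of one connector reaching out of each of $u,v,w$ together with a rainbow tree in $G[D]$ spanning their entry points; since the internal tree is already rainbow in $C_0$, the whole tree is rainbow exactly when the three connectors receive three distinct new colours. The difficulty is that two guaranteed $D$-neighbours give each outside vertex only a two-colour palette of connectors, and three vertices forced onto the same pair of colours admit no system of distinct representatives. This is where the hypotheses must be used in full: $2$-domination lets me choose, for each of $u,v,w$, either of two independent entry points in $D$, and the fourth colour supplies the extra room to break a tie; when an outside vertex still cannot offer a colour distinct from the other two, the condition $\delta(G)\ge 3$ guarantees it has a further neighbour through which its connector can be re-routed, picking up a fresh new colour on the detour. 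The heart of the proof is thus a careful \emph{fixed} assignment of the four new colours to the connector edges for which, for every triple of outside vertices, three pairwise differently coloured connectors (with distinct entry points) can be selected simultaneously. I would establish this by a short Hall-type case analysis, treating separately the outside vertices with three or more neighbours in $D$ (which already offer a rich palette) and those with exactly two neighbours in $D$, whose mandatory neighbour in $V\setminus D$ provides the detour.

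Finally, to see that the additive constant $4$ cannot be lowered, I would exhibit a graph $G$ with a connected $2$-dominating set $D$ for which $rx_3(G)=rx_3(G[D])+4$: take a base graph realising a known value of $rx_3(G[D])$ and glue on a gadget of low-degree outside vertices that genuinely forces four extra colours in any $3$-rainbow colouring — essentially a configuration of three mutually ``hard'' outside vertices of the kind the construction above is designed to survive. Verifying the lower bound then amounts to checking that three new colours never suffice for the attached gadget, a finite if slightly delicate computation.
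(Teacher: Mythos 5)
First, a point of reference: this paper does not actually prove Theorem~\ref{thm2} --- it is quoted from \cite{Liu}, and the introduction only sketches the method of that proof: colour $G[D]$ with $rx_3(G[D])$ colours, take a spanning forest $F$ of $H=G-D$, fix a bipartition $X\cup Y$ of $F$, and colour $E[X,D]$, $E[Y,D]$ and $E[X,Y]$ with the four new colours according to fixed rules. Your opening moves (old palette inside $D$, four fresh colours on all edges meeting $\overline{D}$, reduction to triples with $t=3$ outside vertices, detours via an $H$-neighbour when three two-element leg palettes coincide) match this strategy in spirit, and you correctly locate the crux. But your proposal has a genuine gap precisely there: you assert that a single \emph{fixed} assignment making all triples simultaneously good exists and can be found ``by a short Hall-type case analysis,'' without constructing it, and the local, vertex-by-vertex plan you describe does not suffice. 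The reason is a global consistency constraint you never address. If $w$ has exactly two legs and must detour through an $H$-neighbour $x$, the detour connector occupies \emph{two} new colours, $\{c(wx),\,c(\text{a leg of }x)\}$, and for the union with the other two connectors to be rainbow this pair must be exactly the complement of the pair used by the other two vertices. Under one fixed colouring this forces $H$-adjacent vertices to carry complementary leg palettes and forces the colour of $wx$ to be coordinated with both endpoints' palettes at once --- a proper two-classing of (a spanning forest of) $H$ together with rules for the forest edges, which is exactly the $X/Y$ bipartition structure in the actual proof and is absent from yours. Concretely, if three vertices $u,v,w$ each have exactly two legs coloured $\{1,2\}$ and each of their available $H$-edges is also coloured from $\{1,2\}$ (nothing in your scheme forbids this), then every connector of each of $u,v,w$ uses a colour from $\{1,2\}$, so among any three chosen connectors two collide and no rainbow selection exists; note also that a plain Hall/SDR argument on palettes is the wrong tool, since detour connectors consume two colours rather than one. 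Repairing this means designing the edge colours on $E[X,D]$, $E[Y,D]$, $E[X,Y]$ coherently (including the boundary cases: isolated vertices and forest leaves of $H$, where $\delta(G)\ge 3$ together with $2$-domination must be invoked to guarantee either a third leg or a usable detour in the right direction) --- which is the real content of the cited proof, not a routine afterthought.

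The tightness half is likewise not established: you describe the shape of a putative extremal gadget but give no graph and no lower-bound computation, whereas the theorem's tightness claim rests on explicit examples in \cite{Liu} (the same circle of examples behind the tight $K_{s,t}$ bound this paper mentions right after the theorem; compare also the explicit lower-bound arguments via identical coloured stars used in Examples 1--3 of this paper). So the proposal should be judged as a plausible strategy outline sharing the cited proof's first step, but with the central construction --- the fixed four-colour scheme, which essentially forces one to rediscover the spanning-forest bipartition --- and the extremal example both missing.
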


From Theorem \ref{thm2}, the authors determined a tight upper bound
for the 3-rainbow index of the complete bipartite graphs $K_{s,t} \
(3\leq s \leq t)$.

The proofs of the above two theorems are similar. First color the
edges in $G[D]$ using $k$ different colors $\left(k=rc(G[D])\  or \
rx_3(G[D])\right)$. Then select a spanning tree in every connected
component of $H=G-D$. So we construct a spanning forest $F$ of $H$
and choose $X$ and $Y$ as any one of the bipartitions defined by the
forest $F$. Color the edges between $X$ and $D$ and the edges
between $Y$ and $D$ as well as the edges between $X$ and $Y$ with
suitable colors, which gives an edge-coloring we want. Note that in
the process all the edges in $E(H)-E(F)$ are ignored.

In this paper, we will take the edges in $E(H)-E(F)$ into
consideration to get a more subtle coloring strategy. We show that
for a connected graph $G$, $rx_{3}(G)\leq rx_{3}(G[D])+6$, where $D$
is a connected three-way dominating set of $G$. Moreover, this bound
is tight. By using the results on spanning trees with many leaves,
we obtain that $rx_3(G)\leq\frac{3n}{\delta+1}+4$ for every
connected graph $G$ on $n$ vertices with minimum degree at least
$\delta$ ($3\leq\delta\leq5$), and the bound is tight up to an
additive constant; whereas for every connected graph $G$ on $n$
vertices with minimum degree at least $\delta$ ($\delta\geq3$), we
get that $rx_{3}(G)\leq
n\frac{ln(\delta+1)}{\delta+1}(1+o_{\delta}(1))+5$. In addition,
when considering a connected 3-dominating set $D$ of $G$, we prove
that $rx_{3}(G)\leq rx_{3}(G[D])+3$, and the bound is tight. The
farthest we can get with this idea is some tight upper bounds for
some special graph classes, including threshold graphs, chain graphs
and interval graphs.

\section{Preliminaries}
For a graph $G$, we use $V(G)$, $E(G)$, $|G|$, $\delta(G)$, and
$diam(G)$ to denote its vertex set, edge set, order (number of
vertices), minimum degree and the diameter (maximum distance between
every pair of vertices) of $G$, respectively. For $D\subseteq V(G)$,
let $\overline{D}=V(G)\setminus D$, and $G[D]$ be the subgraph of
$G$ induced on $D$. For $v\in V(G)$, let $N(v)$ denote the set of
neighbors of $v$. For two disjoint subsets $X$ and $Y$ of $V(G)$,
$E[X,Y]$ denotes the set of edges of $G$ between $X$ and $Y$.

\begin{definition}
BFS (breadth-first search) is a strategy for searching in a graph.
It begins at a root and inspects all its neighbors. Then for each of
those neighbors in turn, it inspects their neighbors which were
unvisited, and so on until all the vertices in the graph are
visited.
\end{definition}

\begin{definition}
A BFS-tree (breadth-first search tree) is a spanning rooted tree
returned by BFS. Let $T$ be a BFS-tree with $r$ as its root. For a
vertex $v$, the height of $v$ is the distance between $v$ and $r$.
All the vertices of height $k$ form the $kth$ level of $T$. The
ancestors of $v$ are the vertices on the unique $\{v,r\}$-path in
$T$. The parent of $v$ is its neighbor on the unique $\{v,r\}$-path
in $T$. Its other neighbors are called the children of $v$. The
siblings of $v$ are the vertices in the same level as $v$.
\end{definition}

\noindent\textbf{Remark:} $BFS$-trees have a nice property: every
edge of the graph joins vertices on the same or consecutive levels.
It is not possible for an edge to skip a level. Thus the neighbor of
a vertex $v$ has three possibilities: (1) a sibling of $v$; (2) the
parent of $v$ or a right sibling of the parent of $v$; (3) a child
of $v$ or a left sibling of the children of $v$; see Figure 1.

\begin{figure}[ht]
\begin{center}
\includegraphics[width=6cm]{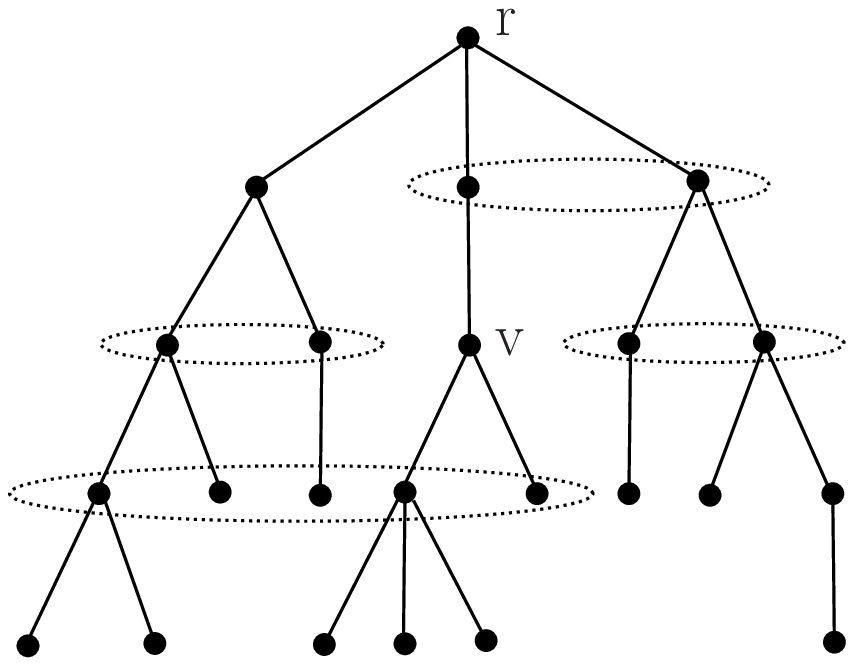}\\
Figure 1: The vertices in the dotted circles are the potential
neighbors of $v$.
\end{center}
\end{figure}

\begin{definition}
The Steiner distance $d(S)$ of a set $S$ of vertices in a graph $G$
is the minimum size of a tree in $G$ containing $S$. The $k$-Steiner
diameter $sdiam_k(G)$ of $G$ is the maximum Steiner distance of $S$
among all the sets $S$ with $k$ vertices in $G$. Obviously,
$sdiam_2(G)=diam(G)$ and $sdiam_k(G)\leq sdiam_{k+1}(G)$.
\end{definition}

\begin{definition}
Let $G$ be a graph, $D\subseteq V(G)$ and $v\in V(G)\setminus D$. we
call a path $P=v_0 v_1\cdots v_k$ is a v-D path if $v_0=v$ and
$V(P)\cap D=\{v_k\}$. Two or more paths are called internally
disjoint if none of them contains an inner vertex of another.
\end{definition}

\begin{definition}
 An edge-colored graph is rainbow if no two edges in the graph share the same color.
\end{definition}

\begin{definition}
Let $D$ be a dominating set of a graph $G$. For $v\in \overline{D}$,
its neighbors in $D$ are called foots of $v$, and the corresponding
edges are called legs of $v$.
\end{definition}

\begin{definition}
A graph $G$ is called a threshold graph, if there exists a weight
function $w: V(G)\rightarrow R$ and a real constant $t$ such that
two vertices $u, v\in V(G)$ are adjacent if and only if
$w(u)+w(v)\geq t$. We call $t$ the threshold for $G$.
\end{definition}

\begin{definition}
A bipartite graph $G(A,B)$ is called a chain graph, if the vertices
of $A$ can be ordered as $A=(a_1,a_2, \ldots ,a_k)$ such that
$N(a_1)\subseteq N(a_2)\subseteq \ldots\subseteq N(a_k)$.
\end{definition}

\begin{definition}
An intersection graph of a family $\mathcal{F}$ of sets is a graph
whose vertices can be mapped to the sets in $\mathcal{F}$ such that
there is an edge between two vertices in the graph if and only if
the corresponding two sets in $\mathcal{F}$ have a non-empty
intersection. An interval graph is an intersection graph of
intervals on the real line.
\end{definition}

\section{Main results}

\begin{theorem}\label{thm3}
If $D$ is a connected three-way dominating set of a connected graph $G$,
 then $rx_{3}(G)\leq rx_{3}(G[D])+6$. Moreover, the bound is tight.
\end{theorem}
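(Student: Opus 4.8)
The plan is to follow the same template as Theorems~\ref{thm1} and \ref{thm2}: first color $G[D]$ with a $3$-rainbow coloring using $k=rx_3(G[D])$ colors, then extend this coloring to all of $G$ with only $6$ fresh colors, disjoint from the first $k$. The new ingredient, as announced in the introduction, is that I would not discard the non-forest edges of $H:=G-D$. Concretely, I would grow a BFS-tree in each component of $H$, let $F$ be the resulting spanning forest, and let $(X,Y)$ be the bipartition of $\overline{D}$ given by the parity of the BFS-level. By the BFS property recalled in the Remark, every forest edge joins $X$ and $Y$, while every non-forest edge of $H$ joins two vertices on the same level (both in $X$ or both in $Y$) or on two consecutive levels; this restricted structure is what makes a consistent global assignment of the fresh colors possible.

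The key reduction is this: to produce a rainbow $S$-tree for a triple $S$, it suffices to route each vertex of $S\cap\overline{D}$ to $D$ along a rainbow \emph{escape path} of length at most $2$ (a leg, or an $H$-edge followed by a leg), to make the (at most three) escape paths internally disjoint and colored with pairwise disjoint sets of fresh colors, and then to join their entry points inside $D$ by a rainbow tree, which exists because $G[D]$ is $3$-rainbow colored and the entry points form a set of at most three vertices. Since an escape path uses at most two fresh colors, three of them can be made color-disjoint with $3\times 2=6$ fresh colors; this is exactly where the $+6$ comes from. To carry this out I would split the fresh colors into three pairs and design the coloring of the legs and of the edges of $H$ so that every vertex of $\overline{D}$ admits a short rainbow escape route inside \emph{each} pair. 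The degree condition $d(v)\ge 3$ for $v\in\overline{D}$ guarantees three edges leaving $v$ to seed three (nearly) independent routes, and the non-forest edges of $H$ supply the relays needed when $v$ has fewer than three feet, so that a vertex with a single foot can still reach $D$ through a neighbor in $\overline{D}$.

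I would then verify $3$-rainbowness by a case analysis on $|S\cap\overline{D}|\in\{0,1,2,3\}$, the first case being immediate from the coloring of $G[D]$. The main obstacle is precisely the configuration that the proofs of Theorems~\ref{thm1} and \ref{thm2} sidestep by ignoring $E(H)\setminus E(F)$: when two or three vertices of $S$ lie in the same class of $(X,Y)$, or share a foot in $D$, their cheapest escape routes collide in color or in an internal vertex. Resolving these collisions simultaneously for every triple, using the non-forest edges and the degree-$\ge 3$ redundancy while keeping the global assignment consistent along the BFS-forest, is the crux of the argument and the step I expect to be most delicate.

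Finally, for tightness I would exhibit a family in which all $6$ extra colors are forced. The natural candidate is to take a base graph on $D$ with a known small value of $rx_3(G[D])$ and attach a gadget of degree-$3$ vertices in $\overline{D}$ arranged so that some triple $S\subseteq\overline{D}$ cannot be connected by any rainbow tree unless all three escape paths are present and pairwise color-disjoint, forcing $rx_3(G)\ge rx_3(G[D])+6$; combined with the upper bound this yields equality.
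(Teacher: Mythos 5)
Your high-level architecture does match the paper's (a BFS-tree in each component of $H=G-D$, retention of the non-forest edges of $H$, short rainbow escape paths into $D$ colored with six fresh colors, and the final reduction by cases on $|S\cap D|$), but the concrete mechanism you propose for spending the six colors fails. You require every $v\in\overline{D}$ to admit, \emph{inside each} of three globally fixed pairs of fresh colors, a rainbow escape path of length at most $2$ (a leg, or an $H$-edge followed by a leg). This is unsatisfiable. Let a component of $H$ be a long induced path $\cdots\, l\, v\, r\,\cdots$ in which every internal vertex has exactly one foot (such a vertex has degree $2+1=3$ in $G$, so $D$ is a legitimate connected three-way dominating set once the feet are arranged so that $G[D]$ is connected; note $D$ need not be $2$-dominating here, which is precisely the regime beyond Theorem~\ref{thm2}). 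An internal vertex $v$ has exactly three candidate escapes: its leg $e_v$, and the relays $v\,l\,t(l)$ and $v\,r\,t(r)$. To cover all three pairs, each candidate must cover exactly one, so $\{c(vl),c(e_l)\}$ must equal a pair distinct from the pair containing $c(e_v)$. Applying the same requirement at $l$, whose relay through $v$ is $l\,v\,t(v)$, forces $\{c(vl),c(e_v)\}$ to be a pair as well; that pair contains $c(e_v)$, hence $c(vl)$ lies in the pair of $c(e_v)$, contradicting that $c(vl)$ lies in the different pair $\{c(vl),c(e_l)\}$. So an $H$-edge cannot carry a consistent pair label in both directions: escape paths of length $3$ are unavoidable, a single escape may then use three colors, and the accounting ``$3\times 2=6$'' on which your plan rests collapses. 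You yourself defer exactly this point (``resolving these collisions \dots is the crux''), so the actual content of the theorem is absent from the proposal.

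For comparison, the paper never attempts disjoint per-vertex color budgets. It colors each BFS-tree periodically by height mod $3$, with two subtree types, making every non-leaf \emph{safe} (three internally disjoint $v$--$D$ paths, of length up to $3$, whose union is rainbow and may use all six colors), and then processes the dangerous leaves in a carefully chosen linear order, coloring one extra edge per leaf and occasionally recoloring its leg while a flagging argument certifies that previously safe vertices stay safe. Crucially, safety of each vertex separately is \emph{not} enough for triples: the paper exhibits color sets $\{1\}\cup\{2,4\}\cup\{5,6\}$, $\{1\}\cup\{2,5\}\cup\{4,6\}$, $\{1\}\cup\{2,6\}\cup\{4,5\}$ for which no choice of one path per vertex has rainbow union. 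It therefore proves a necessary and sufficient compatibility criterion ((C1)/(C2)) and verifies it against the finite list of color classes its specific coloring realizes; your proposal contains no counterpart to this step and no mechanism excluding such bad configurations. Finally, your tightness argument is only a wish: the paper's witness is the French Windmill of Figure~5 with $D=\{v_0\}$ (so $rx_3(G[D])=0$), where a pigeonhole argument over $t\ge 2\cdot 5^6+1$ identically colored $K_4$-blocks shows $rx_3(G)=6$; some concrete construction of this kind is needed, since ``forcing all six extra colors'' is exactly the nontrivial lower-bound claim.
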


The proof of Theorem \ref{thm3} is given in Section 4. Let us first
show how this implies the following results.

\begin{corollary}\label{cor0}
For every connected graph $G$ with $\delta(G)\geq3$,
$rx_{3}(G)\leq \gamma_c(G)+5$.
\end{corollary}

\begin{proof}
In this case, every connected dominating set of $G$ is a connected
three-way dominating set. Now take a minimum connected dominating
set $D$ in $G$. Then $rx_3(G[D])\leq |D|-1=\gamma_c(G)-1$. It
follows from Theorem \ref{thm3} that $rx_3(G)\leq rx_3(G[D])+6\leq
\gamma_c(G)+5$.
\end{proof}

From the following lemma, we can get the next corollary.
\begin{lemma}\label{lem1}

(1)\cite{Kleitman} Every connected graph on $n$ vertices with
minimum degree $\delta\geq3$ has a spanning tree with at least
$\frac{1}{4}n+2$ leaves;

(2)\cite{Griggs} Every connected graph on $n$ vertices with minimum degree
$\delta\geq4$ has a spanning tree with at least $\frac{2}{5}n+\frac{8}{5}$ leaves;

(3)\cite{Griggs} Every connected graph on $n$ vertices with minimum degree
$\delta\geq5$ has a spanning tree with at least $\frac{1}{2}n+2$ leaves;
\end{lemma}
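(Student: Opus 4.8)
The plan is to prove the three bounds together as instances of a single extremal argument about maximum-leaf spanning trees, with the constants $1/4$, $2/5$, $1/2$ emerging from a parameter that depends on $\delta$. Throughout I would fix a spanning tree $T$ of $G$ having the maximum possible number of leaves, write $\ell$ for that number, and exploit the elementary tree identity $\ell = 2 + \sum_{d\geq 3}(d-2)n_d$, where $n_d$ is the number of internal vertices of degree $d$ in $T$. This identity makes the target explicit: leaves are plentiful exactly when the internal vertices are forced to high degree, and the only obstruction is long \emph{bare paths}, i.e.\ chains of internal vertices of degree $2$. All three statements then reduce to showing that the hypothesis $\delta(G)\geq k$ prevents a leaf-maximum tree from wasting vertices on such chains. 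Equivalently, since the internal vertices of any spanning tree form a connected dominating set and, conversely, any connected dominating set yields a spanning tree whose leaves are the vertices outside it, one has $\ell_{\max}=n-\gamma_c(G)$, so the lemma is the same as the connected-domination bounds $\gamma_c(G)\leq \tfrac34 n-2$, $\tfrac35 n-\tfrac85$, and $\tfrac12 n-2$.

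Concretely I would run the Kleitman--West \emph{tree-growing} procedure. Begin by taking a vertex $v$ of maximum degree together with all its neighbors; this star already has at least $k+1$ vertices, of which at least $\delta\geq k$ are leaves. Call a current leaf $u$ \emph{expandable} if it has at least two neighbors outside $V(T)$, and repeatedly \emph{expand}: pick an expandable leaf, make it internal, and attach to $T$ all (or a carefully chosen subset of) its outside neighbors as new leaves. Each such step converts one leaf into an internal vertex but creates at least two new leaves, so the ratio of leaves to total vertices stays above the threshold. The process halts when every leaf has at most one neighbor outside $T$; the remaining vertices must then be attached, and the heart of the accounting is to show that this ``endgame'' destroys only a controlled number of leaves.

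For the quantitative bound I would use an amortized (potential-function) analysis: assign to the growing tree a potential measuring current leaves minus $c_k$ times current vertices, show that every expansion keeps the potential nondecreasing, and check that the initial star starts the potential off with the right additive surplus ($+2$, $+\tfrac85$, $+2$). The value $c_k$ is tuned to the minimum degree: a larger $k$ forces expanded vertices to have more outside neighbors on average and shortens the admissible bare paths, which is exactly why the leaf fraction climbs from $1/4$ (for $k=3$) to $2/5$ and then $1/2$. The cases $k=4,5$ require the slightly stronger refinement of \cite{Griggs}, in which one also charges the outside neighbors more aggressively, using the fact that a degree-$2$ internal vertex in a leaf-maximum tree has its extra edges ``blocked'' by maximality.

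The main obstacle is the endgame together with the exact additive constants. Pushing the leading fraction from the crude ratio $1/(k+1)$ up to the sharp values $1/4$, $2/5$, $1/2$ forces a case analysis on the local configurations around bare paths and around leaves with exactly one outside neighbor, and one must argue, via maximality of $\ell$, that none of these configurations can be locally rewired to gain a leaf --- otherwise $T$ would not be leaf-maximum. Carrying this out while preserving the \emph{exact} additive terms $+2,+\tfrac85,+2$ (rather than an unspecified $O(1)$) is the delicate bookkeeping; this is precisely the content of the cited theorems, and I would follow the discharging of \cite{Kleitman} for part (1) and of \cite{Griggs} for parts (2) and (3) to pin the constants down.
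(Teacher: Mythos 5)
The paper gives no proof of this lemma at all---it is imported verbatim from \cite{Kleitman} and \cite{Griggs}---and your proposal, after correctly carrying out the standard reductions (the degree identity $\ell = 2+\sum_{d\geq 3}(d-2)n_d$, the equivalence $\ell_{\max}=n-\gamma_c(G)$, which is exactly how the paper exploits the lemma in Corollary~\ref{cor1}, and the check that the initial star supplies the surpluses $+2$, $+\tfrac{8}{5}$, $+2$), explicitly delegates the decisive content---the endgame where every leaf has at most one outside neighbor, the maximality/rewiring case analysis, and the pinning down of the exact additive constants---back to those same two citations. So your attempt matches the paper's treatment in substance: it is an accurate and error-free outline of the Kleitman--West/Griggs--Wu tree-growing argument rather than an independent proof, which is appropriate here since the statement is itself a quoted result.
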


\begin{corollary}\label{cor1}
(1) For every connected graph $G$ on $n$ vertices with $\delta(G)\geq3$,
$rx_{3}(G)\leq \frac{3}{4}n+3$.

(2) For every connected graph $G$ on $n$ vertices with $\delta(G)\geq4$,
$rx_{3}(G)\leq \frac{3}{5}n+\frac{17}{5}$.

(3) For every connected graph $G$ on $n$ vertices with $\delta(G)\geq5$,
$rx_{3}(G)\leq \frac{1}{2}n+3$.

Moreover, these bounds are tight up to an additive constant.
\end{corollary}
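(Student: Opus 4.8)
The key idea is to combine Theorem~\ref{thm3} with the spanning-tree-with-many-leaves results of Lemma~\ref{lem1}, via the observation that the complement of a leaf set of a spanning tree is a connected dominating set. The plan is as follows. Let $G$ be a connected graph on $n$ vertices with $\delta(G)\geq\delta$ for the appropriate value $\delta\in\{3,4,5\}$, and let $T$ be a spanning tree of $G$ furnished by the corresponding part of Lemma~\ref{lem1}, with leaf set $L$ satisfying $|L|\geq c_\delta n + b_\delta$ (where $(c_3,b_3)=(\tfrac14,2)$, $(c_4,b_4)=(\tfrac25,\tfrac85)$, $(c_5,b_5)=(\tfrac12,2)$). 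First I would set $D=V(G)\setminus L$, i.e.\ $D$ consists of all internal (non-leaf) vertices of $T$. Since removing the leaves of a spanning tree leaves a connected subtree spanning $D$, the set $D$ is a connected dominating set of $G$: it induces a connected subgraph (the tree $T$ restricted to its internal vertices is connected), and every leaf in $L$ is adjacent in $T$, hence in $G$, to its parent in $D$.

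Next I would upgrade ``connected dominating'' to ``connected three-way dominating.'' Because $\delta(G)\geq 3$ in all three cases, every vertex of $G$---in particular every vertex of $\overline{D}=L$---has degree at least $3$, so the condition $d(v)\geq 3$ for all $v\in\overline{D}$ holds automatically. Thus $D$ is a connected three-way dominating set, and Theorem~\ref{thm3} applies to give $rx_3(G)\leq rx_3(G[D])+6$. Since $G[D]$ is connected on $|D|=n-|L|$ vertices, the trivial bound $rx_3(H)\leq |H|-1$ quoted in the introduction yields $rx_3(G[D])\leq |D|-1=n-|L|-1$. Combining, $rx_3(G)\leq (n-|L|-1)+6 = n-|L|+5$.

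It then remains to substitute the leaf bounds and simplify. For part~(1), $|L|\geq \tfrac14 n+2$ gives $rx_3(G)\leq n-(\tfrac14 n+2)+5=\tfrac34 n+3$. For part~(2), $|L|\geq\tfrac25 n+\tfrac85$ gives $rx_3(G)\leq n-(\tfrac25 n+\tfrac85)+5=\tfrac35 n+\tfrac{17}{5}$. For part~(3), $|L|\geq\tfrac12 n+2$ gives $rx_3(G)\leq n-(\tfrac12 n+2)+5=\tfrac12 n+3$. This establishes the three stated upper bounds.

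For the tightness claim, I would exhibit, for each regime of $\delta$, a family of graphs whose $3$-rainbow index grows linearly in $n$ with leading coefficient matching $\tfrac34$, $\tfrac35$, $\tfrac12$ respectively, so that the bound is correct up to the additive constant. A natural approach is to glue together many small ``gadget'' blocks of fixed degree $\delta$ arranged along a path (for instance, chains of cliques $K_{\delta+1}$ or similar constructions whose leaf-rich spanning trees force the leading term), and to lower-bound $rx_3$ of the resulting graph by counting colors needed to rainbow-connect triples spread across the gadgets---since any rainbow $S$-tree connecting far-apart triples must traverse many blocks, forcing $\Omega(n)$ colors with the right constant. I do not expect to grind through the full lower-bound calculation here. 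The main obstacle is precisely this tightness part: verifying that the candidate extremal families both satisfy the minimum-degree hypothesis and genuinely require the claimed number of colors (matching $sdiam_3$-type lower bounds) is the delicate step, whereas the upper bounds follow routinely from the reduction above.
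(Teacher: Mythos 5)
Your upper-bound argument is correct and is in substance identical to the paper's: the paper also takes $D$ to be the set of non-leaves of the spanning tree from Lemma~\ref{lem1}, observes that $\delta(G)\geq 3$ makes any connected dominating set a connected three-way dominating set, and applies Theorem~\ref{thm3} together with the trivial bound $rx_3(G[D])\leq |D|-1$ (the paper merely packages this as Corollary~\ref{cor0}, $rx_3(G)\leq\gamma_c(G)+5$, and then substitutes $\gamma_c(G)\leq n-(c_\delta n+b_\delta)$). Your arithmetic in all three cases matches the paper's.

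The gap is the tightness claim, which is part of the statement and which you explicitly leave unproved. Moreover, your sketch overestimates its difficulty: no color-counting over triples spread across gadgets is needed, because the lower bound is a one-liner once the right family is in hand. The paper (following a construction in \cite{Caro}) takes $m$ copies of $K_{\delta+1}$ flanked by two copies of $K_{\delta+2}$, joins $x_{i,2}$ to $x_{i+1,1}$ for consecutive blocks, and \emph{deletes} the edge $x_{i,1}x_{i,2}$ inside each block; the deletion forces any path through a block to use two internal edges (so each block contributes $3$ to the diameter), while the added joining edges restore every endpoint to degree $\geq\delta$, answering exactly the minimum-degree worry you raise. One then computes $diam(G^*)=\frac{3}{\delta+1}n-\frac{\delta+7}{\delta+1}$ and invokes the trivial chain $rx_3(G^*)\geq sdiam_3(G^*)\geq diam(G^*)$ from \cite{Chartrand2}, giving $rx_3(G^*)\geq\frac34 n-\frac52$, $\frac35 n-\frac{11}{5}$, $\frac12 n-2$ for $\delta=3,4,5$ respectively. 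So the missing idea in your proposal is not a delicate extremal analysis but the specific edge-deletion trick that simultaneously preserves $\delta(G^*)\geq\delta$ and pumps up the diameter to the required leading coefficient $\frac{3}{\delta+1}$; without verifying some such construction, the sentence ``these bounds are tight up to an additive constant'' remains an assertion rather than a proof.
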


\begin{proof}
We only prove (1); (2) and (3) can be derived by the same arguments.

Clearly, we can take a connected dominating set consisting of all
the non-leaves in the spanning tree. Thus by Lemma \ref{lem1}, for
every connected graph $G$ on $n$ vertices with minimum degree
$\delta(G)\geq3$, $\gamma_c(G)\leq
n-(\frac{1}{4}n+2)=\frac{3}{4}n-2$. Then it follows from Corollary
\ref{cor0} that $rx_{3}(G)\leq\frac{3}{4}n+3$.

On the other hand, the factors in these bounds cannot be improved,
since there exist infinitely many graphs $G^*$ such that
$rx_{3}(G^*)\geq \frac{3}{\delta+1}n-\frac{\delta+7}{\delta+1}$. We
construct the graphs as follows (the construction was also mentioned
in \cite{Caro}): first take $m$ copies of $K_{\delta+1}$, denoted by
$X_1, X_2, \ldots, X_m$ and label the vertices of $X_i$ with
$x_{i,1},\ldots, x_{i,\delta+1}$. Then take two copies of
$K_{\delta+2}$, denoted by $X_0$ and $X_{m+1}$ and similarly label
their vertices. Now join $x_{i,2}$ and $x_{i+1,1}$ for
$i=0,1,\ldots, m$ with an edge and delete the edges $x_{i,1}x_{i,2}$
for $i=0,1,\ldots, m+1$. See Figure 2 for $\delta=3$. It is easy to
see that $diam(G^*)=\frac{3}{\delta+1}n-\frac{\delta+7}{\delta+1}$.
The $k$-Steiner diameter of a graph is a trivial lower bound for its
$k$-rainbow index \cite{Chartrand2}, and so $rx_3(G^*)\geq
sdiam_3(G^*)\geq
diam(G^*)=\frac{3}{\delta+1}n-\frac{\delta+7}{\delta+1}$. For
$\delta=3$, $rx_3(G^*)\geq\frac{3}{4}n-\frac{5}{2}$; for $\delta=4$,
$rx_3(G^*)\geq\frac{3}{5}n-\frac{11}{5}$; for $\delta=5$,
$rx_3(G^*)\geq\frac{1}{2}n-2$. Therefore, all these upper bounds are
tight up to an additive constant.
\end{proof}

\begin{figure}[ht]
\begin{center}
\includegraphics[width=10cm]{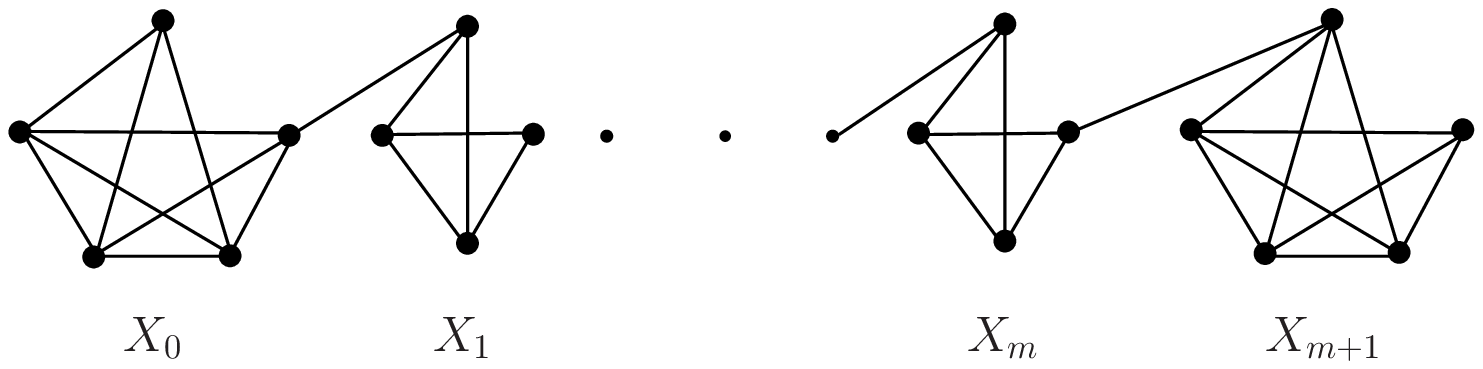}\\
Figure 2: An example for $\delta =3.$
\end{center}
\end{figure}

As to general $\delta$, Caro et. al. \cite{Caro1} proved that for
every connected graph $G$ on $n$ vertices with minimum degree
$\delta$,
$\gamma_c(G)=n\frac{ln(\delta+1)}{\delta+1}(1+o_{\delta}(1))$.
Combining with Corollary \ref{cor0}, we get the following result.

\begin{corollary}\label{cor2}
For every connected graph $G$ on $n$ vertices with minimum degree
$\delta$ ($\delta\geq3$), $rx_{3}(G)\leq
n\frac{ln(\delta+1)}{\delta+1}(1+o_{\delta}(1))+5$.
\end{corollary}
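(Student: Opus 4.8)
The plan is to combine Corollary \ref{cor0} with the asymptotic estimate of the connected domination number due to Caro et al. \cite{Caro1}; indeed, this corollary is nothing more than the composition of the two. First I would invoke Corollary \ref{cor0}: since $\delta(G)\geq 3$, every connected graph $G$ on $n$ vertices satisfies $rx_3(G)\leq \gamma_c(G)+5$. This is the place where all the real work has already been spent, as Corollary \ref{cor0} itself rests on Theorem \ref{thm3} and the coloring strategy that produces the additive constant.

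Next I would substitute the estimate for $\gamma_c(G)$. By \cite{Caro1}, for every connected graph $G$ on $n$ vertices with minimum degree $\delta$ one has $\gamma_c(G)=n\frac{ln(\delta+1)}{\delta+1}(1+o_{\delta}(1))$. Inserting this into the inequality above immediately gives $rx_3(G)\leq n\frac{ln(\delta+1)}{\delta+1}(1+o_{\delta}(1))+5$, which is exactly the claimed bound.

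Because both ingredients are assumed available, I expect no genuine obstacle: the argument is a one-line deduction, with the logarithmic growth rate supplied by the extremal result on connected dominating sets and the constant $5$ inherited from the rainbow-coloring extension. The only point meriting a moment of care is the bookkeeping of that additive constant. Since for fixed $\delta$ the leading term $n\frac{ln(\delta+1)}{\delta+1}$ grows linearly in $n$, the constant $5$ does not interfere with the $(1+o_{\delta}(1))$ factor and may simply be kept as an explicit additive term, precisely in the form stated.
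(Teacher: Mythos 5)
Your proposal is correct and matches the paper's argument exactly: the paper also obtains Corollary \ref{cor2} by combining the bound $rx_3(G)\leq\gamma_c(G)+5$ of Corollary \ref{cor0} with the estimate $\gamma_c(G)=n\frac{\ln(\delta+1)}{\delta+1}(1+o_{\delta}(1))$ of Caro et al.\ \cite{Caro1}. Nothing is missing; the deduction is the same one-line composition.
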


The above bound is not believed to be optimal for $rx_3(G)$ in terms
of $\delta$. We pose the following conjecture, which has already
been proved for $\delta=3,4,5$ in Corollary \ref{cor1}. Note that if
the conjecture is true, it gives an upper bound tight up to an
additive constant by the construction of the graph $G^*$.

\begin{conjecture}
For every connected graph $G$ on $n$ vertices with minimum degree
$\delta$ ($\delta\geq3$), $rx_{3}(G)\leq\frac{3n}{\delta+1}+C$,
where $C$ is a positive constant.
\end{conjecture}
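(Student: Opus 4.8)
The plan is to carry the argument through Corollary \ref{cor0} as far as it will go and then confront the regime where it breaks down. By Corollary \ref{cor0} it would suffice, for some fixed constant, to bound the connected domination number: if one could show $\gamma_c(G)\le\frac{3n}{\delta+1}+C'$ for every connected $G$ with $\delta(G)\ge\delta$, then $rx_3(G)\le\gamma_c(G)+5\le\frac{3n}{\delta+1}+C'+5$ would follow at once. Since the non-leaves of a spanning tree form a connected dominating set, this is equivalent to strengthening Lemma \ref{lem1} to all $\delta\ge3$: every connected graph on $n$ vertices with $\delta(G)\ge\delta$ has a spanning tree with at least $\frac{\delta-2}{\delta+1}n-O(1)$ leaves. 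For $\delta\in\{3,4,5\}$ this is exactly the content of Lemma \ref{lem1} and yields Corollary \ref{cor1}, so the naive first step is to try to push the max-leaf estimate to general $\delta$.

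The obstruction is that this reduction \emph{provably fails} for large $\delta$. The bound of Caro et al. used in Corollary \ref{cor2} is best possible in general, so there exist connected graphs with minimum degree $\delta$ (for instance suitable random $\delta$-regular graphs) for which $\gamma_c(G)=n\frac{\ln(\delta+1)}{\delta+1}(1-o_\delta(1))$. Once $\delta+1>e^3$ we have $\frac{\ln(\delta+1)}{\delta+1}>\frac{3}{\delta+1}$, so $\gamma_c(G)$ already exceeds $\frac{3n}{\delta+1}$ by a growing factor and no additive constant can absorb the gap. Hence the conjecture cannot come from any bound of the form $rx_3(G)\le\gamma_c(G)+O(1)$; a separate argument is needed precisely for the graphs that maximize $\gamma_c$. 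The guiding observation is that these extremal graphs are ``expander-like'': a random $\delta$-regular graph has $\gamma_c(G)\approx\frac{\ln(\delta+1)}{\delta+1}n$ yet diameter only $O(\log_\delta n)$, so its $3$-Steiner diameter, and one expects its $3$-rainbow index, lies far below $\frac{3n}{\delta+1}$. In short, large $\gamma_c$ and large $rx_3$ arise from genuinely different structures, which is why the conjecture should hold even though Corollary \ref{cor0} is too weak.

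Accordingly, I would split into two regimes against the threshold $\tau=\frac{3n}{\delta+1}$. If $\gamma_c(G)\le\tau+O(1)$, Corollary \ref{cor0} finishes the job. If $\gamma_c(G)>\tau$, I would bound $rx_3$ directly rather than through domination. The concrete mechanism I would try is to replace the connected three-way dominating set of Theorem \ref{thm3} by a connected \emph{distance-$t$} dominating set $D_t$ (every vertex within distance $t$ of $D_t$): in the dense regime $|D_t|$ shrinks roughly like $\frac{n}{\delta^{\,t-1}}$, and a BFS-based coloring that rainbow-colors the length-$t$ legs from each vertex to $D_t$ with a small \emph{reusable} palette, while spending the main budget on a backbone of length at most $diam(G)\le\frac{3n}{\delta+1}-1$, should keep the total number of colors at $\frac{3n}{\delta+1}+O_\delta(1)$. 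Here $t$ is chosen as a function of $\delta$ (a constant for fixed $\delta$) balancing the size of $D_t$ against the extra colors the longer legs cost.

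The main obstacle is this dense-regime estimate, and it is genuinely the heart of the conjecture. Two things must be controlled simultaneously. First, one needs a \emph{quantitative} structural statement: that $\gamma_c(G)>\frac{3n}{\delta+1}$ forces the local expansion that makes $D_t$ small and the Steiner trees short. This concerns the interplay of minimum degree, diameter and domination and does not follow from the leaf bounds alone. Second, unlike the distance-$1$ case of Theorem \ref{thm3}, there is no general upper bound for $rx_3$ in terms of $sdiam_3$, so one must \emph{explicitly build} a rainbow $S$-tree for every triple $S$ when the legs have length up to $t$ and colors are reused across far-apart regions; guaranteeing no color collision inside the tree connecting three vertices that happen to lie in the same dense region is the delicate point. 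Closing the gap between the factors $\frac{\ln(\delta+1)}{\delta+1}$ and $\frac{3}{\delta+1}$ in the intermediate range of $\delta$ is exactly where a new idea is required.
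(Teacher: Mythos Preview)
The statement you are attempting is a \emph{conjecture} in the paper, not a theorem: the authors pose it as open, record that it is settled for $\delta\in\{3,4,5\}$ by Corollary~\ref{cor1}, and offer only the weaker Corollary~\ref{cor2} for general~$\delta$. There is therefore no proof in the paper to compare against.

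Your proposal is not a proof either, and to your credit you say so explicitly. Your diagnosis of the obstruction is accurate and is precisely the reason the authors leave the statement as a conjecture: the domination route through Corollary~\ref{cor0} cannot succeed beyond small~$\delta$ because $\gamma_c(G)$ can be of order $n\frac{\ln(\delta+1)}{\delta+1}$, which overshoots $\frac{3n}{\delta+1}$ once $\delta+1>e^3$. Your two-regime split and the distance-$t$ dominating set idea are reasonable heuristics, and the diameter bound $diam(G)\le\frac{3n}{\delta+1}-1$ you invoke is a genuine theorem, but the ``dense-regime coloring'' you describe is only a sketch: the estimate $|D_t|\lesssim n/\delta^{t-1}$ does not hold for arbitrary graphs of minimum degree~$\delta$ (it needs expansion, which is exactly what you have not yet pinned down), and you yourself flag that building rainbow $S$-trees over length-$t$ legs with a reused palette is the unresolved core. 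In short, your write-up is a correct explanation of why the problem is open and a plausible programme for attacking it, not a proof; this matches the paper, which also does not claim one.
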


With regard to the graphs possessing vertices of degree 1 or 2, we
obtain the following result.
\begin{corollary}
For every connected graph $G$, $rx_3(G)\leq \gamma_c(G)+n_1+n_2+5$,
where $n_1$ and $n_2$ denote the number of vertices of degrees 1 and
2 in $G$, respectively.
\end{corollary}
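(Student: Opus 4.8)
The plan is to reduce to Theorem \ref{thm3} by enlarging a minimum connected dominating set so that it absorbs all the low-degree vertices, thereby turning it into a connected three-way dominating set whose size we can control. Concretely, I would start with a minimum connected dominating set $D_0$ of $G$, so that $|D_0|=\gamma_c(G)$ and $G[D_0]$ is connected, and let $W$ denote the set of all vertices of degree $1$ or $2$ in $G$, so that $|W|=n_1+n_2$. I then set $D=D_0\cup W$ and claim that $D$ is a connected three-way dominating set of $G$.

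The verification of this claim is where the three defining conditions must be checked, and it is the only place any real argument is needed. First, $D$ is dominating because $D_0\subseteq D$ and $D_0$ already dominates $G$. Second, $D$ satisfies the three-way condition: every vertex of $V\setminus D$ lies outside $W$, hence has degree at least $3$ by the very definition of $W$. Third, and this is the key point, $G[D]$ is connected: $G[D_0]$ is connected, and every vertex $w\in W\setminus D_0$ is dominated by $D_0$, so $w$ has a neighbor in $D_0\subseteq D$ and is therefore joined to the connected part $G[D_0]$ inside $G[D]$. Thus adding the low-degree vertices cannot break connectivity, which is exactly what makes the bound clean.

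With the claim established, I would apply Theorem \ref{thm3} to obtain $rx_3(G)\leq rx_3(G[D])+6$. Since $G[D]$ is a connected graph on $|D|$ vertices, the trivial bound $rx_3(H)\leq |H|-1$ for connected $H$ gives $rx_3(G[D])\leq |D|-1$. Combining these and using $|D|\leq |D_0|+|W|=\gamma_c(G)+n_1+n_2$ yields
\[
rx_3(G)\leq rx_3(G[D])+6\leq (|D|-1)+6=|D|+5\leq \gamma_c(G)+n_1+n_2+5,
\]
as desired. I do not anticipate a genuine obstacle here: the entire content is the observation that inserting the degree-$1$ and degree-$2$ vertices into a connected dominating set preserves both domination and connectedness while simultaneously forcing the remaining vertices to have degree at least $3$, after which Theorem \ref{thm3} and the elementary upper bound finish the estimate.
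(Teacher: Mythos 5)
Your proposal is correct and follows essentially the same route as the paper: enlarge a minimum connected dominating set by all vertices of degree $1$ or $2$, observe that the result is a connected three-way dominating set of size at most $\gamma_c(G)+n_1+n_2$ (connectivity holding because each added vertex is dominated by $D_0$), and then apply Theorem \ref{thm3} together with the trivial bound $rx_3(G[D])\leq |D|-1$. The paper compresses all of this into one sentence flagged as ``obvious,'' so your write-up simply makes explicit the same verification the authors left implicit.
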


\begin{proof}
Obviously, adding all the vertices of degrees 1 and 2 into a minimum
connected dominating set forms a connected three-way dominating set
in $G$ of size no more than $\gamma_c(G)+n_1+n_2$. Consequently, by
Theorem \ref{thm3}, $rx_3(G)\leq \gamma_c(G)+n_1+n_2+5$.
\end{proof}

We proceed with another upper bound for the 3-rainbow index of
graphs concerning the connected 3-dominating set.

\begin{theorem}\label{thm4}
If $D$ is a connected 3-dominating set of a connected graph $G$ with $\delta(G)\geq3$,
 then $rx_{3}(G)\leq rx_{3}(G[D])+3$. Moreover, the bound is tight.
\end{theorem}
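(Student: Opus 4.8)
The plan is to extend an optimal $3$-rainbow coloring of $G[D]$ to all of $G$ using only three new colors, exploiting the fact that a connected $3$-dominating set gives every vertex outside $D$ at least three legs lying entirely in $D$. Concretely, I would fix a $3$-rainbow coloring $c_0$ of $G[D]$ with $k=rx_3(G[D])$ colors and introduce three fresh colors $\alpha,\beta,\gamma$. Since $D$ is $3$-dominating, each $v\in\overline D$ has at least three distinct feet in $D$; I color three of its legs with $\alpha,\beta,\gamma$ respectively and color all remaining edges arbitrarily (say with $\alpha$), so that \emph{every} vertex of $\overline D$ carries all three fresh colors on its legs. This is sharper than simply invoking Theorem \ref{thm3}, which only yields $+6$: the three guaranteed feet in $D$ let me route every connecting tree through $G[D]$ and avoid the expensive detours through $\overline D$ that a merely three-way dominating set would require. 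If $\overline D=\emptyset$ the statement is trivial, and otherwise picking any $v\in\overline D$ shows $|D|\geq 3$.

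Next I would verify the coloring is $3$-rainbow by a case analysis on $t=|S\cap\overline D|$ for a triple $S$. If $t=0$, a rainbow $S$-tree already lives in $G[D]$. If $t\geq 1$, fix an injection of $S\cap\overline D$ into $\{\alpha,\beta,\gamma\}$; for each such vertex I select the leg whose color is its prescribed value, which sends it to a foot in $D$. I then join these feet together with the vertices of $S\cap D$ by a rainbow tree supplied by $c_0$ and attach the chosen legs. Because the legs carry pairwise distinct fresh colors absent from $G[D]$, the union is a rainbow $S$-tree. The only bookkeeping arises when selected feet coincide with one another or with a vertex of $S\cap D$, in which case the $G[D]$-part degenerates to a smaller rainbow tree, a rainbow path, or a single vertex; here I use that a $3$-rainbow coloring of $G[D]$ is in particular rainbow-connected on pairs, which follows by adjoining an auxiliary third vertex of $D$.

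The decisive case, and the one pinning the additive constant at exactly $3$, is $t=3$: connecting three vertices of $\overline D$ forces one leg per vertex, hence three legs that must receive three pairwise distinct colors disjoint from the palette of $G[D]$. This is precisely what three feet per outside vertex makes possible; by contrast, two feet per outside vertex (as for $2$-dominating sets) cannot realize three distinct leg colors simultaneously, which is why the weaker hypothesis of Theorem \ref{thm2} forces a larger additive constant. Conceptually the upper bound is therefore clean, and I expect the only routine nuisance to be the coincidence bookkeeping just described.

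For tightness I would exhibit a family in which the three extra colors are genuinely unavoidable: take a base graph $B$ with a known, controllable value of $rx_3(B)$, set $G[D]=B$, and attach three vertices, each joined to three vertices of $B$, positioned so that every tree connecting the three attached vertices must cross a long rainbow path inside $B$ together with three legs. The matching lower bound can be sought either from the Steiner-diameter estimate $rx_3(G)\geq sdiam_3(G)$ used elsewhere in the paper, or from a direct argument showing no $3$-rainbow coloring can reuse the palette of $B$ on all three legs at once. The hard part of the whole statement is exactly this lower bound: one must rule out clever reuse of the base colors on the legs, rather than the construction itself, which is straightforward.
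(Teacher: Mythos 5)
Your upper-bound argument is essentially the paper's own proof of this theorem. The paper colors, for each $v\in\overline{D}$, one leg with $1$, one with $2$, and all remaining legs with $3$, colors $G[D]$ with $rx_3(G[D])$ fresh colors, and for a triple $S$ assigns the fresh colors injectively to the vertices of $S\cap\overline{D}$, attaching the chosen legs to a rainbow tree in $G[D]$ on the corresponding feet together with $S\cap D$ --- exactly your scheme with $\{\alpha,\beta,\gamma\}=\{1,2,3\}$. Your explicit coincidence bookkeeping (rainbow trees on one or two vertices of $D$, obtained by adjoining an auxiliary third vertex, using $|D|\geq 3$) is correct and is left implicit in the paper; that half of the statement is fine.

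The genuine gap is the tightness claim, which is part of the statement and which your proposal does not prove; moreover, the construction you sketch would fail. Attaching a \emph{constant} number of vertices (three, each with three feet) to a base $B$ cannot force the extra three colors: one can give the three stars distinct rainbow patterns (say, cyclic permutations of three colors on the legs) and connect any triple of attached vertices through a single common foot with a three-edge rainbow star, so such graphs typically admit colorings well below $rx_3(B)+3$. The Steiner-diameter bound is also powerless here: in the paper's tight example $sdiam_3(G)=3$ while the value to be certified is $5$. What the paper actually does is take the threshold graph of its Figure 3: a triangle on $\{y_1,y_2,y_3\}$ plus $t\geq 2\cdot 4^3+1$ vertices $x_i$, each adjacent to all of $y_1,y_2,y_3$; then $D=\{y_1,y_2,y_3\}$ induces $K_3$ with $rx_3(G[D])=2$, so the theorem gives $rx_3(G)\leq 5$, and the matching lower bound is a \emph{pigeonhole} argument: with only $4$ colors, among the $t$ stars $S_i$ three must be identically colored; if such a star repeats a color there is no rainbow tree connecting the three centers, and if it is rainbow then any tree connecting the three centers must contain all of $y_1,y_2,y_3$, hence has at least five edges but only four available colors. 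The idea you are missing is that tightness requires \emph{unboundedly many} identically dominated vertices so that identical stars are forced --- no bounded attachment, and no distance-based lower bound, can pin the additive constant at $3$.
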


\begin{proof}
Since $D$ is a connected 3-dominating set, every vertex in
$\overline{D}$ has at least three legs. Color one of them with 1,
one of them with 2, and all the others with 3. Let $k=rx_3(G[D])$.
Then we can color the edges in $G[D]$ with $k$ different colors from
$\{4,5,\ldots, k+3\}$ such that for every triple of vertices in $D$,
there exists a rainbow tree in $G[D]$ connecting them. If there
remain uncolored edges in $G$, we color them with 1.

Next we will show that this edge-coloring is a 3-rainbow coloring of
$G$. For any triple $\{u,v,w\}$ of vertices in $G$, if $(u,v,w)\in
D\times D\times D$, then there is already a rainbow tree connecting
them in $G[D]$. If one of them is in $\overline{D}$, say $(u,v,w)
\in \overline{D}\times D\times D$, join any leg of $u$ (colored by
1, 2, or 3) with the rainbow tree connecting $v,w$ and the
corresponding foot of $u$ in $G[D]$. If two of them are in
$\overline{D}$, say $(u,v,w) \in \overline{D}\times
\overline{D}\times D$, join one leg of $u$ colored by $1$, one leg
of $v$ colored by $2$ with the rainbow tree connecting $w$ and the
corresponding foots of $u,v$ in $G[D]$. If $(u,v,w)\in
\overline{D}\times \overline{D}\times \overline{D}$, join one leg of
$u$ colored by $1$, one leg of $v$ colored by $2$, one leg of $w$
colored by $3$ with the rainbow tree connecting the corresponding
foots of $u,v,w$ in $G[D]$.

The tightness of the bound can be seen from the next Corollary.
\end{proof}

As immediate consequences of Theorem \ref{thm3} and Theorem
\ref{thm4}, we get the following:

\begin{corollary}
Let $G$ be a connected graph with $\delta(G)\geq3$.

(1) if $G$ is a threshold graph, then $rx_{3}(G)\leq 5$;

(2) if $G$ is a chain graph, then $rx_{3}(G)\leq 6$;

(3) if $G$ is an interval graph, then $rx_{3}(G)\leq diam(G)+4$.
Thus $diam(G) \leq rx_{3}(G)\leq diam(G)+4$;

Moreover, all these upper bounds are tight.
\end{corollary}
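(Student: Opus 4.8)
The plan is to handle the three classes uniformly: exploit the defining structure of each class to exhibit a connected dominating set $D$ whose induced subgraph $G[D]$ has a tiny, easily computed $3$-rainbow index, and then feed $D$ into whichever of Theorem \ref{thm3}, Theorem \ref{thm4}, or Corollary \ref{cor0} yields the sharpest additive constant. The key structural observation in each case is that the defining condition (threshold weights, nested neighbourhoods, the interval order) together with the hypothesis $\delta(G)\geq 3$ forces a few ``top'' vertices to dominate — and, when we want the sharper $+3$ of Theorem \ref{thm4}, to dominate with multiplicity three.

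For the threshold case I would order the vertices as $v_1,\dots,v_n$ with $w(v_1)\geq\cdots\geq w(v_n)$. Since $u\sim v$ iff $w(u)+w(v)\geq t$, the neighbourhoods are nested by weight, and $\delta(G)\geq 3$ forces the minimum-weight vertex $v_n$ to be adjacent to the three heaviest vertices $v_1,v_2,v_3$; nestedness then gives that every other vertex is adjacent to all of $v_1,v_2,v_3$ and that $\{v_1,v_2,v_3\}$ is a clique. Hence $D=\{v_1,v_2,v_3\}$ is a connected $3$-dominating set with $G[D]=K_3$. A two-colouring of the triangle (e.g.\ colours $1,2,1$) makes a spanning path rainbow while one colour cannot, so $rx_3(K_3)=2$, and Theorem \ref{thm4} gives $rx_3(G)\leq 2+3=5$.

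For the chain case I would use that a connected chain graph has nested neighbourhoods on \emph{both} sides: letting $a_k$ be the vertex of $A$ of largest neighbourhood and $b_m$ the corresponding vertex of $B$, connectedness forces $a_k$ to be adjacent to all of $B$ and $b_m$ to all of $A$, so $a_k\sim b_m$ and $D=\{a_k,b_m\}$ is a connected dominating set inducing a single edge. With $\delta(G)\geq 3$ every vertex of $\overline{D}$ has degree at least three, so $D$ is a connected three-way dominating set; as $G[D]=K_2$ contains no triple of vertices, $rx_3(G[D])=0$ (its lone edge being absorbed into the palette supplied by Theorem \ref{thm3}), and Theorem \ref{thm3} yields $rx_3(G)\leq 0+6=6$.

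Finally, for interval graphs I would invoke the order of the intervals along the line to get the sharper domination bound $\gamma_c(G)\leq diam(G)-1$ (provable by a greedy left-to-right selection that advances by one unit of distance per chosen interval) whenever $diam(G)\geq 2$; Corollary \ref{cor0} then gives $rx_3(G)\leq \gamma_c(G)+5\leq diam(G)+4$, while the degenerate case $diam(G)=1$ (a complete graph) is checked directly. The matching lower bound is automatic, since $rx_3(G)\geq sdiam_3(G)\geq sdiam_2(G)=diam(G)$, giving $diam(G)\leq rx_3(G)\leq diam(G)+4$; tightness of all three bounds would be certified by exhibiting explicit extremal members of each class. The main obstacle is not any single step but calibrating the additive constant: obtaining $5$ rather than $6$ for threshold graphs hinges on producing a set that dominates with multiplicity three (so that Theorem \ref{thm4} rather than Theorem \ref{thm3} applies), whereas for chain graphs the bipartite structure blocks a small $3$-dominating set and one must settle for three-way domination and the $+6$ constant; likewise the $+4$ for interval graphs rests entirely on the sharp estimate $\gamma_c(G)\leq diam(G)-1$ rather than the weaker $\gamma_c(G)\leq diam(G)$.
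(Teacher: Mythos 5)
Your threshold and interval arguments essentially reproduce the paper's: $D=\{v_1,v_2,v_3\}$ inducing $K_3$ with Theorem \ref{thm4} gives $2+3=5$, and for interval graphs the paper likewise handles the complete case separately and otherwise uses a dominating path with at most $diam(G)-1$ vertices (it cites \cite{Chandran} for a dominating path of length at most $diam(G)-2$, where you propose to reprove this greedily) to get $diam(G)+4$. But your chain-graph case has a genuine gap. You take $D=\{a_k,b_m\}$, declare $rx_3(G[D])=0$ because $K_2$ has no triple of vertices, and conclude $rx_3(G)\leq 0+6=6$ from Theorem \ref{thm3}. This is not licensed by the theorem: $G[D]$ has an edge, and every edge must receive a color, so $rx_3(K_2)=1$ at best (the vacuous triple condition does not let you color an edge with zero colors), which yields $7$, not $6$. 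Nor can the edge $a_kb_m$ be ``absorbed into the palette'' of the six colors used on $E[D,\overline{D}]\cup E(G[\overline{D}])$: in the proof of Theorem \ref{thm3}, the rainbow $u$-$D$, $v$-$D$, $w$-$D$ paths for a triple in $\overline{D}$ can jointly consume all six colors (the Claim~1 analysis contains color sets such as $\{1\}\cup\{2,4\}\cup\{3,5,6\}$), and the connecting tree inside $G[D]$ --- here possibly exactly the edge $a_kb_m$ --- must avoid all of them, which is why the theorem insists on fresh colors for $E(G[D])$. Your stated reason for settling for three-way domination, namely that ``the bipartite structure blocks a small $3$-dominating set,'' is also false: since $\delta(G)\geq 3$, the three top vertices $a_{k-2},a_{k-1},a_k$ are adjacent to all of $B$, and three vertices $b_1,b_2,b_3\in N(a_1)$ are adjacent to all of $A$, so $D=\{a_{k-2},a_{k-1},a_k,b_1,b_2,b_3\}$ is a connected $3$-dominating set inducing $K_{3,3}$, and $rx_3(K_{3,3})=3$ gives $3+3=6$ via Theorem \ref{thm4} --- this is exactly how the paper gets the bound.

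A second, smaller omission: the statement includes tightness of all three bounds, and you only promise that extremal examples ``would be certified.'' The paper actually constructs them and proves sharpness by pigeonhole arguments --- a threshold graph with $t\geq 2\cdot 4^3+1$ stars through three common neighbors (forcing three identically colored stars under $4$ colors, whence no rainbow tree), an analogous chain graph with $t\geq 2\cdot 5^3+4$, and the French windmill of $t\geq 2\cdot 5^6+1$ copies of $K_4$ sharing a vertex for the interval bound. Without such constructions the ``Moreover, all these upper bounds are tight'' clause remains unproved.
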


\begin{proof}
(1) Suppose that $V(G)=\{v_1,v_2,\ldots, v_n\}$ where $w(v_1)\geq
w(v_2)\geq\ldots\geq w(v_n)$. Since the minimum degree of $G$ is at
least three, $v_i$ ($1\leq i\leq 3$) is adjacent to all the other
vertices in $G$. Thus $D=\{v_1,v_2,v_3\}$ consists of a connected
3-dominating set of $G$. Note that $D$ induces a $K_3$, so
$rx_3(G[D])=2$. It follows from Theorem \ref{thm4} that $rx_3(G)\leq
rx_3(G[D])+3=5$.

(2) Suppose that $G=G(A,B)$ and the vertices of $A$ can be ordered
as $A=(a_1,a_2, \ldots ,a_k)$ such that $N(a_1)\subseteq
N(a_2)\subseteq \ldots\subseteq N(a_k)$. Since the minimum degree of
$G$ is at least three, $a_i$ ($k-2\leq i\leq k$) is adjacent to all
the vertices in $B$, and $N(a_1)$ has at least three vertices, say
$\{b_1,b_2,b_3\}$. Clearly $b_i$ ($1\leq i\leq 3$)
is adjacent to all the vertices in $A$. Thus
$D=\{a_{k-2},a_{k-1},a_{k}, b_1,b_2,b_3\}$ consists of a connected
3-dominating set of $G$. Note that $D$ induces a $K_{3,3}$, so
$rx_3(G[D])=3$ (see \cite{Chen}). It follows from Theorem \ref{thm4} that $rx_3(G)\leq
rx_3(G[D])+3=6$.

(3) If $G$ is isomorphic to a complete graph, then $rx_3(G)=2$ or
$3$ (see \cite{Chartrand2}), the assertion holds trivially. Otherwise, it was showed in
\cite{Chandran} that every interval graph $G$ which is not
isomorphic to a complete graph has a dominating path $P$ of length
at most $diam(G)-2$. Since $\delta(G)\geq3$, $P$ consists of a
connected three-way dominating set of $G$. It follows from Theorem
\ref{thm3} that $rx_3(G)\leq rx_3(P)+6\leq diam(G)+4$. On the other
hand, $rx_3(G)\geq sdiam_3(G) \geq diam_(G)$. We conclude that for a
connected interval graph $G$ with $\delta\geq3$, $diam(G)\leq
rx_3(G)\leq diam(G)+4$

Here we give examples to show the tightness of these upper bounds.

\noindent\textbf{Example 1:}  A threshold graph $G$ with
$\delta(G)\geq3$ and $rx_3(G)=5$.

\begin{figure}[ht]
\begin{center}
\includegraphics[width=7cm]{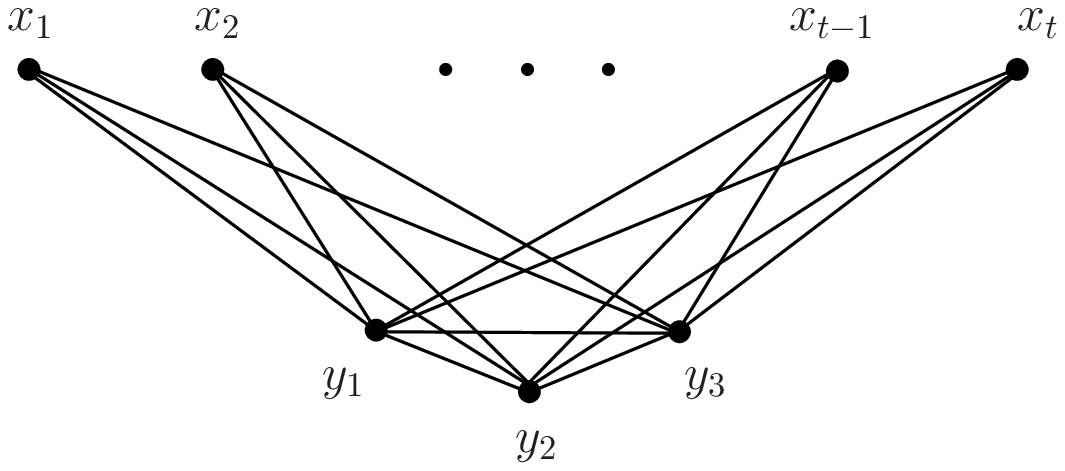}\\
Figure 3:  A threshold graph $G$ with $\delta(G)\geq3$ and
$rx_3(G)=5.$
\end{center}
\end{figure}

Consider the graph in $Figure \ 3$, where $t\geq2\times4^3+1$. It is
easy to see that it is a threshold graph ($y_1, y_2, y_3$ can be
given a weight 1, others a weight 0 and the threshold 1). By
contradiction, we assume that $G$ can be colored with 4 colors. Let
$S_i$ denote the star with $x_i$ as its center and $E(S_i) =
\{x_iy_1, x_iy_2, x_iy_3\}$. Every $S_i$ can be colored in $4^3$
different ways. Since $t\geq 2\times4^3+1$, there exist three
completely identical edge-colored stars, say $S_1$, $S_2$ and $S_3$.
If two of the three edges in $S_i$ $(1\leq i\leq 3)$ receive the
same color, then there are no rainbow trees connecting
$x_1,x_2,x_3$, a contradiction. If the three edges in $S_i$ $(1\leq
i\leq 3)$ receive distinct colors, then the rainbow tree connecting
$x_1,x_2,x_3$ must contain the vertices $y_1,y_2,y_3$. Thus the tree
has at least five edges, but only four different colors, a
contradiction.

\noindent\textbf{Example 2:}  A chain graph $G$ with $\delta(G)\geq3$ and $rx_3(G)=6$.

\begin{figure}[ht]
\begin{center}
\includegraphics[width=10cm]{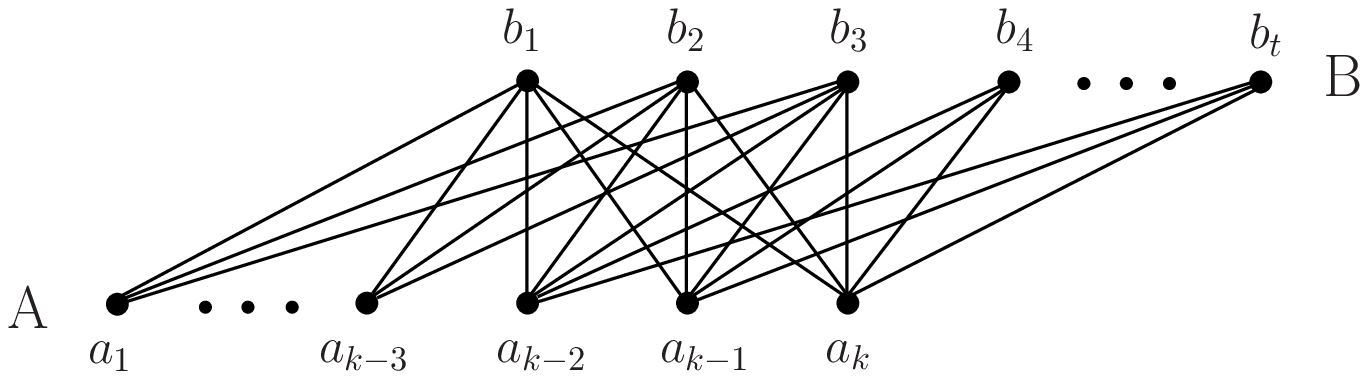}\\
Figure  4: A chain graph $G$ with $\delta(G)\geq3$ and $rx_3(G)=6.$
\end{center}
\end{figure}

Consider the bipartite graph in $Figure \ 4$, where $N(a_1)=N(a_2
=\cdots=N(a_{k-3})=\{b_1, b_2, b_3\}$,
$N(a_{k-2})=N(a_{k-1})=N(a_{k})=\{b_1, b_2, \cdots, b_t\}$, and
$t\geq 2\times5^3+4$. By contradiction, we assume that $G$ can be
colored with 5 colors. Let $S_i$ ($4\leq i\leq t$) denote the star
with $b_i$ as its center and $E(S_i) = \{b_ia_{k-2}, b_ia_{k-1},
b_ia_{k}\}$. Every $S_i$ can be colored in $5^3$ different ways.
Since $t-3\geq 2\times5^3+1$, among the $t-3$ $S_i$$'s$ there exist
three completely identical edge-colored stars, say $S_4$, $S_5$ and
$S_6$. If two of the three edges in $S_i$ $(4\leq i\leq 6)$ receive
the same color, then there are no rainbow trees connecting
$b_4,b_5,b_6$, a contradiction. If the three edges in $S_i$ $(4\leq
i\leq 6)$ receive distinct colors, then the rainbow tree connecting
$b_4,b_5,b_6$ must contain $a_{k-2},a_{k-1},a_{k}$ and at least one
vertex in $B\setminus \{b_4,b_5,b_6\}$ to connect
$a_{k-2},a_{k-1},a_{k}$. Thus the tree has at least six edges, but
only five different colors, a contradiction.

\noindent\textbf{Example 3:}  An interval graph $G$ with
$\delta(G)\geq3$ and $rx_3(G)=diam(G)+4$.

\begin{figure}[ht]
\begin{center}
\includegraphics[width=6cm]{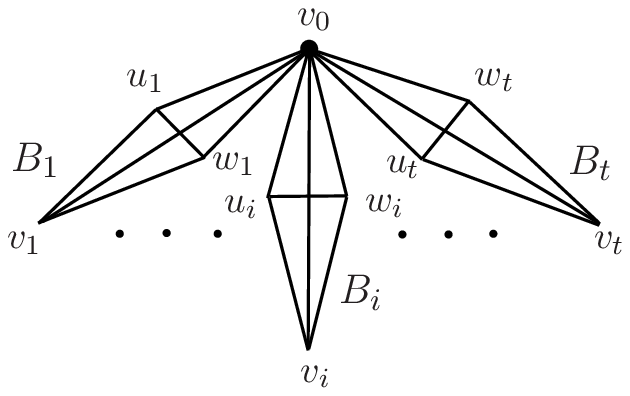}\\
Figure 5 An interval graph $G$ with $\delta(G)\geq3$ and
$rx_3(G)=diam(G)+4.$
\end{center}
\end{figure}

Consider the graph in $Figure \ 5$ (it is known as a French
Windmill), where $t\geq 2\times 5^6+1$. It is easy to see that it is
an interval graph with diameter 2. It follows from (3) that
$rx_3(G)\leq diam(G)+4=6$. We will show that $rx_3(G)=6$. By
contradiction, we assume that $G$ can be colored with 5 colors. Let
$B_i$ denote the $K_4$ induced by $v_0,u_i,v_i,w_i$. Obviously, each
$B_i$ can be colored in at most $5^6$ different ways. Since $t\geq
2\times 5^6+1$, there exist three completely identical edge-colored
subgraphs, say $B_1, B_2, B_3$. If two of the three edges incident
with $v_0$ in $B_i$ ($1 \leq i\leq 3$) receive the same color, say
$c(v_0u_i)=c(v_0v_i)=1$, then there are no rainbow trees connecting
$u_1,u_2,u_3$, a contradiction. If the three edges incident with
$v_0$ in $B_i$ ($1 \leq i\leq 3$) receive distinct colors, say
$c(v_0u_i)=1, c(v_0v_i)=2, c(v_0w_i)=3$, then  $c(u_iv_i)\neq
c(u_iw_i)$ and $c(u_iv_i), c(u_iw_i)\in \{4,5\}$ because there
exists a rainbow tree connecting $\{u_1,u_2,u_3\}$. Without loss of
generality, suppose $c(u_iv_i)=4$ and $c(u_iw_i)=5$ . Since there
exists a rainbow tree connecting $\{v_1,v_2,v_3\}$, then
$c(v_iw_i)=5$. But then there exist no rainbow trees connecting
$\{w_1,w_2,w_3\}$ in $G$, a contradiction.
\end{proof}

\section{Proof of Theorem 3}
Let $D$ be a connected three-way dominating set of a connected graph
$G$. We want to show that $rx_3(G)\leq rx_3(G[D])+6$.

To start with, we introduce some definitions and notation that are
used in the sequel. A set of rainbow paths
$\{P_1,P_2,\ldots,P_\ell,\}$ is called \emph{super-rainbow} if their
union $\bigcup_{i=1}^{\ell}P_{i}$ is also rainbow. For a vertex $v$
in $\overline{D}$, we call it \emph{safe} if there are three
internally disjoint super-rainbow $v-D$ paths. Otherwise, we call
$v$ \emph{dangerous}. Let $c(e)$ be the color of an edge $e$, $c(H)$
the set of colors appearing on the edges in a graph $H$. For a
vertex $v$ in a $BFS$-tree, we denote the height of $v$ by $h(v)$,
the parent of $v$ by $p(v)$, the child of $v$ by $ch(v)$, the
ancestor of $v$ in the first level by $\pi(v)$.

Let us overview our idea: firstly, we aim to color the edges in
$E[D,\overline{D}]$ and $E(G[\overline{D}])$ with six different
colors. Our coloring strategy has two steps: in the first step, we
give a periodical coloring on some edges in $E[D,\overline{D}]$ and
$E(G[\overline{D}])$. And then most vertices in $\overline{D}$
become safe; in the second step, we color the carefully chosen
uncolored edges and recolor some colored edges intelligently to
ensure that all the vertices in $\overline{D}$ are safe. Then we
extend the coloring to the whole graph by coloring the edges in
$G[D]$ with $rx_3(G[D])$ fresh colors. Finally, we will show that
this edge-coloring of $G$ is a 3-rainbow coloring, which implies
$rx_3(G)\leq rx_3(G[D])+6$.

\subsection{Color the edges in $E[D,\overline{D}]$ and $E(G[\overline{D}])$}
\noindent\textbf{4.1.1~  First step: a periodical coloring}

Assume that $C_1,C_2,\ldots,C_q$ are the connected components of the
subgraph $G-D$.

If $C_i$ $(1\leq i\leq q)$ consists of an isolated vertex $v$, then
$v$ has at least three legs. We color one of them with 1, one of
them with 2, and all the others with 3. Note that now $v$ is safe.

If $C_i$ $(1\leq i\leq q)$ consists of an isolated edge $uv$, then
$u$ has at least two legs. We color one of them with 1, and all the
others with 2. Similarly, $v$ has at least two legs. We color one of
them with 2, and all the others with 3. And color $uv$ with 4. Note
that now both $u$ and $v$ are safe.

If $C_i$ $(1\leq i\leq q)$ consists of at least three vertices, then
there exists a vertex $v_0$ in $C_i$ possessing at least two
neighbors in $C_i$. Starting from $v_0$, we construct a $BFS$-tree
$T$ of $C_i$. Suppose the neighbors of $v_0$ in $C_i$ are
$\{v_1,v_2,\ldots,v_k\}$ ($k\geq2$), which forms the first level of
$T$. For each vertex $v$ in $C_i$, let $e_v$ be one leg of $v$ (if
there are many legs, we pick one arbitrarily), $t(v)$ the
corresponding foot of $v$, $f_v$ the unique edge joining $v$ and its
parent in $T$.

Now we color the edges $e_v$ and $f_v$ as follows: $c(e_{v_0})=2$;
$c(f_{v_i})=4$ and $c(e_{v_i})=1$ for $1\leq i\leq k-1$;
$c(f_{v_k})=5$ and $c(e_{v_k})=3$; for each vertex $v$ in
$V(C_i)\setminus \{v_0,v_1,\ldots,v_k\}$, if $\pi(v)=v_k$, then set
$c(f_v)=4$ and $c(e_v)=2$  when $h(v)\equiv 0 \ (mod\ 3)$,
$c(f_v)=5$ and $c(e_v)=3$ when $h(v)\equiv 1 \ (mod\ 3)$, $c(f_v)=6$
and $c(e_v)=1$ when $h(v)\equiv 2 \ (mod\ 3)$; otherwise, if
$\pi(v)=v_i(1\leq i\leq k-1)$, then set $c(f_v)=6$ and $c(e_v)=2$
when $h(v)\equiv 0 \ (mod\ 3)$, $c(f_v)=4$ and $c(e_v)=1$ when
$h(v)\equiv 1 \ (mod\ 3)$, $c(f_v)=5$ and $c(e_v)=3$ when
$h(v)\equiv 2 \ (mod\ 3)$. In fact, this gives a periodical coloring
depicted as Figure 6.

We call the subtree of $T$ rooted at $v_i \ (1\leq i\leq k-1)$ of
type $I$ and the subtree of $T$ rooted at $v_k$ of type
$\uppercase\expandafter{\romannumeral2}$. There may be many subtrees
of type $I$, but only one subtree of type
$\uppercase\expandafter{\romannumeral2}$. The subtrees of the same
type are colored in the same way. More precisely, if two vertices
$u,v$ lie in the same level and belong to subtrees of the same type,
then $c(e_u)=c(e_v)$ and $c(f_u)=c(f_v)$ after first step.

\begin{figure}[ht]
\begin{center}
\includegraphics[width=5.5cm]{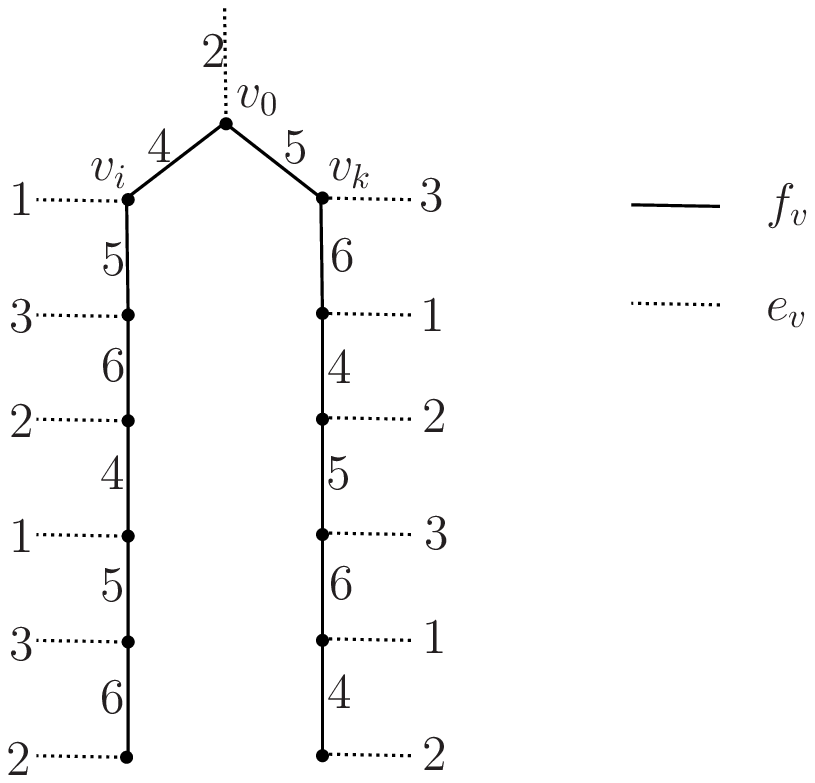}\\
Figure 6: The left branch represents the coloring of subtrees of
type $I$ and the right branch represents the coloring of the subtree
of type $\uppercase\expandafter{\romannumeral2}$.
\end{center}
\end{figure}

Now each non-leaf vertex in $T$ has three internally disjoint
super-rainbow paths connecting it to $D$: for the root $v_0$,
$P_1^{v_0}=v_0,t(v_0)$; $P_2^{v_0}=v_0,v_1,t(v_1)$;
$P_3^{v_0}=v_0,v_k,t(v_k)$. for other non-leaf vertex $v$ in $T$,
$P_1^v=v,t(v)$; $P_2^v=v,p(v),t(p(v))$; $P_3^v=v,ch(v),t(ch(v))$.
(Note that $v$ may have many children $u_1, u_2,\ldots, u_{\ell}$,
but all the $e_{u_i}$$'s$, $f_{u_i}$$'s$ receive the same color. So
they only contribute one path to the three internally disjoint
super-rainbow $v-D$ paths.) In other words, after first step all the
non-leaf vertices in $T$ are safe .

As to each leaf $v'$ in $T$, since $v'$ has no children, it has
exactly two internally disjoint super-rainbow $v'-D$ paths:
 $P_1^{v'}=v',t(v')$;
 $P_2^{v'}=v',p(v'),t(p(v'))$.
 In other words, after first step all the leaves in $T$ are dangerous.

\noindent\textbf{Example 4:} The root $v_0$ is safe:
$c(P_1^{v_0})=\{2\}$, $c(P_2^{v_0})=\{1,4\}$,
$c(P_3^{v_0})=\{3,5\}$.

If $v_i$ $(1\leq i\leq k-1)$ is not a leaf of $T$, then $v_i$ is
safe: $c(P_1^{v_i})=\{1\}$, $c(P_2^{v_i})=\{2,4\}$,
$c(P_3^{v_i})=\{3,5\}$.

If $v_k$ is not a leaf of $T$, then $v_k$ is safe:
$c(P_1^{v_k})=\{3\}$, $c(P_2^{v_k})=\{2,5\}$,
$c(P_3^{v_k})=\{1,6\}$.

\noindent\textbf{Example 5:} If $v$ is a leaf of $T$ in the second
level with parent $v_k$, then $v$ is dangerous: $c(P_1^{v})=\{1\}$,
$c(P_2^{v})=\{3,6\}$.

All the possible color sets of the three internally disjoint
super-rainbow paths connecting a non-leaf vertex to $D$ are: (the
first part in every brace is the color of $P_1$, the second is the
color of $P_2$, the third is the color of $P_3$)

$\{1,24,35\}$,~~~$\{2,36,14\}$,~~~$\{3,15,26\}$,
~~~$\{1,36,24\}$,~~~$\{2,14,35\}$,~~~$\{3,25,16\}$.

Bearing in mind that $D$ is a connected three-way dominating set,
each leaf in $T$ is incident with at least one uncolored edge. In
second step, we will color such edges and recolor some colored edges
suitably to ensure that all the vertices in $C_i$ are safe.

\noindent\textbf{4.1.2~  Second step: more edges with a more
intelligent coloring}

Let $v$ be a leaf in $T$ and $g_v=vv'$ be one uncolored edge incident
with $v$.

If $g_v$ connects $v$ to $D$, then we give $g_v$ a smallest color
from $\{1,2,3,4,5,6\}\setminus (c(P_1^v)\cup c(P_2^v))$. For
instance, $c(g_v)=2$ for the vertex $v$ in Example 5. Obviously, now
$v$ has three internally disjoint super-rainbow $v-D$ paths
$P_1^v,P_2^v,P_3^v$, where
$P_1^v=v,t(v)$; $P_2^v=v,v'$; $P_3^v=v,p(v),t(p(v))$.
In other words, $v$ is safe after second step. All the possible color sets of
the three internally disjoint super-rainbow paths connecting $v$ to
$D$ are: (the first part in every brace is the color of $P_1$, the
second is the color of $P_2$, the third is the color of $P_3$)

$\{1,2,36\}$, ~~$\{1,3,24\}$, ~~$\{1,3,25\}$, ~~$\{2,3,15\}$,
~~$\{2,3,14\}$.

Now it remains to deal with the leaves in $T$ whose incident uncolored edges
all lie in $C_i$. Let $A$ denote the set of such vertices. First, we
flag all the vertices in $V(C_i)\setminus A$, which are already
safe. Note that we only flag the safe vertices. Once one vertex gets
flagged, it is always flagged. Next we arrange the vertices in $A$
in a linear order by the following three rules:

(R1) for $u,v\in A$, let $\pi(u)=v_i$ and $\pi(v)=v_j$, if $i>j$,
then $u$ is before $v$ in the ordering;

(R2) if $\pi(u)=\pi(v)$ and $h(v)>h(u)$, then $u$ is before $v$ in
the ordering.

(R3) if $\pi(u)=\pi(v)$, $h(u)=h(v)$ and $u$ is reached earlier than
$v$ in the $BFS$-algorithm, then $u$ is before $v$ in the ordering.

Assume the vertices in $A$ are ordered as $A=(w_1,w_2,\ldots,w_s)$.
We will deal with them one by one. Suppose that now we go to the
vertex $w_i$ ($w_1,w_2,\ldots,w_{i-1}$ have been processed). If
$w_i$ is flagged, we go to the next vertex $w_{i+1}$; otherwise,
we distinguish the following four cases:

\emph{\textbf{Case 1}}: $\pi(w_i)=v_k$ and there exists at least one
uncolored edge connecting $w_i$ to some subtree of type $I$. Then we
choose one such edge $w_iv$ such that the height of $v$ is as small
as possible. Since $T$ is a $BFS$-tree and the subtree of $w_i$ is
to the right of the subtree of $v$, then $h(v)=h(w_i)$ or
$h(w_i)+1$.

\noindent\textbf{Fact 1.} $e_v$ is not recolored.

If $v\notin A$, then $e_v$ never gets recolored; if $v\in A$, since
$\pi(w_i)=v_k$ and $\pi(v)=v_j$ $(1\leq j\leq k-1)$, we have not
dealt with $v$ yet according to R1, thus $e_v$ is not recolored.

We distinguish three subcases based on the height of $w_i$.

$\ast$ \emph{Subcase 1.1}: $h(w_i)\equiv 0(mod\ 3)$

If $h(v)=h(w_i)$, then color $w_iv$ with $5$. We have
$c(P_1^{w_i})\cup c(P_2^{w_i})\cup c(P_3^{w_i})=\{2\}\cup
\{1,4\}\cup \{3,5,6\}$ and $c(P_1^{v})\cup c(P_2^{v})\cup
c(P_3^{v})=\{2\}\cup \{3,6\}\cup \{1,4,5\}$.

If $h(v)=h(w_i)+1$, then color $w_iv$ with $5$. We have
$c(P_1^{w_i})\cup c(P_2^{w_i})\cup c(P_3^{w_i})=\{2\}\cup
\{3,4,6\}\cup\{1,5\}$ and $c(P_1^{v})\cup c(P_2^{v})\cup
c(P_3^{v})=\{1\}\cup \{3,4,6\}\cup\{2,5\}$.

Now both $w_i$ and $v$ become safe. We flag $w_i$ and $v$ (if $v$ is
not flagged).

$\ast$ \emph{Subcase 1.2}: $h(w_i)\equiv 1(mod\ 3)$

If $h(v)=h(w_i)$, then color $w_iv$ with $6$. We have
$c(P_1^{w_i})\cup c(P_2^{w_i})\cup c(P_3^{w_i})=\{3\}\cup
\{2,5\}\cup \{1,6\}$ and $c(P_1^{v})\cup c(P_2^{v})\cup
c(P_3^{v})=\{1\}\cup \{2,4\}\cup \{3,6\}$.

If $h(v)=h(w_i)+1$, then color $w_iv$ with $4$ and recolor $e_{w_i}$
with $6$. In this way, we ensure that the parent of $w_i$ is still
safe. Now $c(P_1^{p(w_i)})\cup c(P_2^{p(w_i)})\cup
c(P_3^{p(w_i)})=\{2\}\cup \{1,4\}\cup \{5,6\}$. Moreover, we have
$c(P_1^{w_i})\cup c(P_2^{w_i})\cup c(P_3^{w_i})=\{6\}\cup
\{2,5\}\cup \{3,4\}$ and $c(P_1^{v})\cup c(P_2^{v})\cup
c(P_3^{v})=\{3\}\cup \{1,5\}\cup \{4,6\}$.

Now both $w_i$ and $v$ become safe. We flag $w_i$ and $v$ (if $v$ is
not flagged).

$\ast$ \emph{Subcase 1.3}: $h(w_i)\equiv 2(mod\ 3)$

If $h(v)=h(w_i)$, then color $w_iv$ with $2$ and recolor $e_{w_i}$
with $4$. In this way, we ensure that the parent of $w_i$ is still
safe. Now $c(P_1^{p(w_i)})\cup c(P_2^{p(w_i)})\cup
c(P_3^{p(w_i)})=\{3\}\cup \{2,5\}\cup \{4,6\}$. Moreover, we have
$c(P_1^{w_i})\cup c(P_2^{w_i})\cup c(P_3^{w_i})=\{4\}\cup
\{3,6\}\cup \{1,2,5\}$ and $c(P_1^{v})\cup c(P_2^{v})\cup
c(P_3^{v})=\{3\}\cup \{1,5\}\cup \{2,4\}$.

If $h(v)=h(w_i)+1$, then color $w_iv$ with $5$. We have
$c(P_1^{w_i})\cup c(P_2^{w_i})\cup c(P_3^{w_i})=\{1\}\cup
\{3,6\}\cup \{2,5\}$ and $c(P_1^{v})\cup c(P_2^{v})\cup
c(P_3^{v})=\{2\}\cup \{3,6\}\cup \{1,5\}$.

Now both $w_i$ and $v$ become safe. We flag $w_i$ and $v$ (if $v$ is
not flagged).

\noindent\textbf{Remarks:} 1. When dealing with $w_i$, we just do
two operations: (i) coloring $w_iv$; (ii) recoloring $e_{w_i}$ if
necessary. Note that $e_{w_i}$ is the only edge which may be
recolored in this process. Furthermore, we recolor it in such a way
that the parent of $w_i$ is still safe. In fact, for that sake, we
have no choice but to recolor $e_{w_i}$ ($w_i\notin
\{v_1,v_2,\ldots,v_{k-1}\}$) with the unique color which is from
$\{1,2,3,4,5,6\}$ but does not appear on the three super-rainbow
paths of $p(w_i)$ after the first step. For example, in $Subcase\
1.2$, the color set of the three super-rainbow paths of $p(w_i)$
after first step is $\{2,1,4,5,3\}$, so we recolor $e_{w_i}$ with
$6$. The exception that $w_i\in \{v_1,v_2,\ldots,v_{k-1}\}$ will be
discussed in $Subcase\ 3.2$.

2. One may wonder what is the effect of these operations. First of
all, after the process, $w_i$ becomes safe and gets flagged, and so
does $v$ if $v$ is not flagged. In addition, the process guarantees
that all the safe vertices remain safe. As mentioned above, $p(w_i)$
is still safe after this process. For every other safe vertex in
$V(C_i)\setminus A$, obviously its three internally disjoint
super-rainbow paths do not contain $e_{w_i}$, so it is still safe
after this process. For each safe vertex $v$ in $A$, if its three
internally disjoint super-rainbow paths  contain $e_{w_i}$, $w_i$ is
already safe and gets flagged before dealing with it. Then we go to
$w_{i+1}$ directly without doing this process. So we claim that the
three internally disjoint super-rainbow paths of $v$ do not contain
$e_{w_i}$, and thus it is still safe after this process.

3. The three internally disjoint super-rainbow paths of $w_i$ is one
of the following three cases; see Figure 7.

~~~(i) $P_1^{w_i}=w_i,t(w_i)$, $P_2^{w_i}=w_i,p(w_i),t(p(w_i))$,
$P_3^{w_i}=w_i,v,t(v)$;

~~~(ii) $P_1^{w_i}=w_i,t(w_i)$, $P_2^{w_i}=w_i,p(w_i),t(p(w_i))$,
$P_3^{w_i}=w_i,v,p(v),t(p(v))$;

~~~(iii) $P_1^{w_i}=w_i,t(w_i)$, $P_2^{w_i}=w_i,p(w_i), p(p(w_i)),
t(p(p(w_i)))$, $P_3^{w_i}=w_i,v,t(v)$.

\begin{figure}[ht]
\begin{center}
\includegraphics[width=7cm]{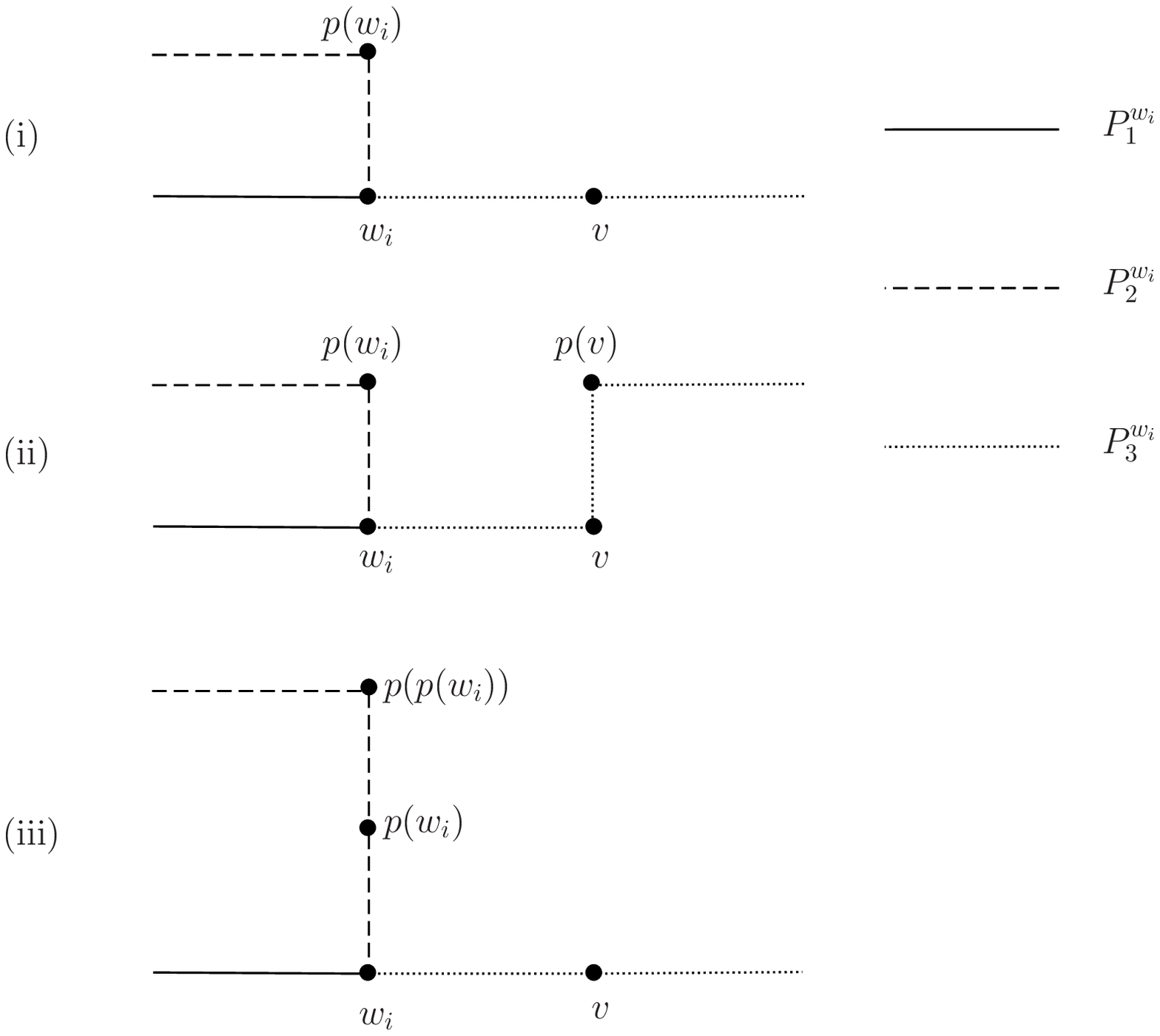}\\
Figure 7: (3) of the Remarks.
\end{center}
\end{figure}

\emph{\textbf{Case 2}}: $\pi(w_i)=v_k$ and all the uncolored edges
connect $w_i$ to the subtree of type
$\uppercase\expandafter{\romannumeral2}$. Then we choose one such
edge $w_iv$ such that the height of $v$ is as small as possible.
Since $T$ is a $BFS-$tree, we get $h(v)=h(w_i)-1$, $h(w_i)$, or
$h(w_i)+1$. The following two facts are easy to see:

\noindent\textbf{Fact 2.} If $h(v)=h(w_i)-1$, then $v$ is already
flagged.

If $v\notin A$, then $v$ gets flagged at the very beginning; if $v\in A$,
since $\pi(v)=\pi(w_i)=v_k$ and $h(v)<h(w_i)$, we have already dealt
with $v$ according to R2, thus $v$ is flagged (note that $e_v$ may
be recolored).

\noindent\textbf{Fact 3.} If $h(v)=h(w_i)+1$, then $e_v$ is not
recolored.

If $v\notin A$, then $e_v$ never gets recolored; if $v\in A$, since
$\pi(v)=\pi(w_i)=v_k$ and $h(v)>h(w_i)$, we have not dealt with $v$
yet according to R2, thus $e_v$ is not recolored.

We distinguish three subcases based on the height of $w_i$.

$\ast$ \emph{Subcase 2.1}: $h(w_i)\equiv 0 \ (mod\ 3)$

If $h(v)=h(w_i)-1$, by Fact $1$ we know that $v$ is already flagged.
No matter whether $e_v$ is recolored or not, we color $w_iv$ with $5$.
Then $c(P_1^{w_i})\cup c(P_2^{w_i})\cup c(P_3^{w_i})=\{2\}\cup
\{1,4\}\cup \{3,5,6\}$. Now $w_i$ becomes safe. We flag $w_i$.

If $h(v)=h(w_i)$, then $v$ may be flagged and $e_v$ may be recolored.
If $e_v$ is not recolored ($c(e_v)=2$), then color $w_iv$ with $6$
and recolor $e_{w_i}$ with $5$. The parent of $w_i$ is still safe.
Now $c(P_1^{p(w_i)})\cup c(P_2^{p(w_i)})\cup
c(P_3^{p(w_i)})=\{1\}\cup \{3,6\}\cup \{4,5\}$. Moreover,
$c(P_1^{w_i})\cup c(P_2^{w_i})\cup c(P_3^{w_i})=\{5\}\cup
\{1,4\}\cup \{2,6\}$ and $c(P_1^{v})\cup c(P_2^{v})\cup
c(P_3^{v})=\{2\}\cup \{1,4\}\cup \{5,6\}$, i.e. both $w_i$ and $v$
are safe. We flag $w_i$ and $v$ (if $v$ is not flagged). If $e_v$ is
recolored ($c(e_v)=5$), it implies $v\in A$ has been dealt with and
got flagged. Then color $w_iv$ with $6$. Now $c(P_1^{w_i})\cup
c(P_2^{w_i})\cup c(P_3^{w_i})=\{2\}\cup \{1,4\}\cup \{5,6\}$, i.e.
$w_i$ becomes safe. We flag $w_i$.

If $h(v)=h(w_i)+1$, by Fact $2$ we know that $e_v$ is not recolored
($c(e_v)=3$). Then color $w_iv$ with $6$. We have $c(P_1^{w_i})\cup
c(P_2^{w_i})\cup c(P_3^{w_i})=\{2\}\cup \{1,4\}\cup \{3,6\}$ and
$c(P_1^{v})\cup c(P_2^{v})\cup c(P_3^{v})=\{3\}\cup \{2,5\}\cup
\{1,4,6\}$, i.e. both $w_i$ and $v$ are safe. We flag $w_i$ and $v$
(if $v$ is not flagged).

$\ast$ \emph{Subcase 2.2}: $h(w_i)\equiv 1 \ (mod\ 3)$

If $h(v)=h(w_i)-1$, by Fact $1$ we know that $v$ is already flagged.
No matter whether $e_v$ is recolored or not, we color $w_iv$ with $6$.
Then $c(P_1^{w_i})\cup c(P_2^{w_i})\cup c(P_3^{w_i})=\{3\}\cup
\{2,5\}\cup \{1,4,6\}$. Now $w_i$ becomes safe. We flag $w_i$.

If $h(v)=h(w_i)$, then $v$ may be flagged and $e_v$ may be
recolored. If $e_v$ is not recolored ($c(e_v)=3$), then color $w_iv$
with $4$ and recolor $e_{w_i}$ with $6$. The parent of $w_i$ is
still safe. Now $c(P_1^{p(w_i)})\cup c(P_2^{p(w_i)})\cup
c(P_3^{p(w_i)})=\{2\}\cup \{1,4\}\cup \{5,6\}$. Moreover,
$c(P_1^{w_i})\cup c(P_2^{w_i})\cup c(P_3^{w_i})=\{6\}\cup
\{2,5\}\cup \{3,4\}$ and $c(P_1^{v})\cup c(P_2^{v})\cup
c(P_3^{v})=\{3\}\cup \{2,5\}\cup \{4,6\}$, i.e. both $w_i$ and $v$
are safe. We flag $w_i$ and $v$ (if $v$ is not flagged). If $e_v$ is
recolored ($c(e_v)=6$), it implies $v$ is flagged. Then color $w_iv$
with $4$. Now $c(P_1^{w_i})\cup c(P_2^{w_i})\cup
c(P_3^{w_i})=\{3\}\cup \{2,5\}\cup \{4,6\}$, i.e. $w_i$ becomes
safe. We flag $w_i$.

If $h(v)=h(w_i)+1$, by Fact $2$ we know that $e_v$ is not recolored
($c(e_v)=1$). Then color $w_iv$ with $4$. We have $c(P_1^{w_i})\cup
c(P_2^{w_i})\cup c(P_3^{w_i})=\{3\}\cup \{2,5\}\cup \{1,4\}$ and
$c(P_1^{v})\cup c(P_2^{v})\cup c(P_3^{v})=\{1\}\cup \{3,6\}\cup
\{2,4,5\}$, i.e. both $w_i$ and $v$ are safe. We flag $w_i$ and $v$
(if $v$ is not flagged).

$\ast$ \emph{Subcase 2.3}: $h(w_i)\equiv 2 \ (mod\ 3)$

If $h(v)=h(w_i)-1$, by Fact $1$ we know that $v$ is already flagged.
No matter whether $e_v$ is recolored or not, we color $w_iv$ with $4$.
Then $c(P_1^{w_i})\cup c(P_2^{w_i})\cup c(P_3^{w_i})=\{1\}\cup
\{3,6\}\cup \{2,4,5\}$. Now $w_i$ becomes safe. We flag $w_i$.

If $h(v)=h(w_i)$, then $v$ may be flagged and $e_v$ may be
recolored. If $e_v$ is not recolored ($c(e_v)=1$), then color $w_iv$
with $5$ and recolor $e_{w_i}$ with $4$. The parent of $w_i$ is
still safe. Now $c(P_1^{p(w_i)})\cup c(P_2^{p(w_i)})\cup
c(P_3^{p(w_i)})=\{3\}\cup \{2,5\}\cup \{1,4,6\}$. Moreover,
$c(P_1^{w_i})\cup c(P_2^{w_i})\cup c(P_3^{w_i})=\{4\}\cup
\{3,6\}\cup \{1,5\}$ and $c(P_1^{v})\cup c(P_2^{v})\cup
c(P_3^{v})=\{1\}\cup \{3,6\}\cup \{4,5\}$, i.e. both $w_i$ and $v$
are safe. We flag $w_i$ and $v$ (if $v$ is not flagged). If $e_v$ is
recolored ($c(e_v)=4$), it implies $v$ is flagged. Then color $w_iv$
with $5$. Now $c(P_1^{w_i})\cup c(P_2^{w_i})\cup
c(P_3^{w_i})=\{1\}\cup \{3,6\}\cup \{4,5\}$, i.e. $w_i$ becomes
safe. We flag $w_i$.

If $h(v)=h(w_i)+1$, by Fact $2$ we know that $e_v$ is not recolored
($c(e_v)=2$). Then color $w_iv$ with $5$. Now $c(P_1^{w_i})\cup
c(P_2^{w_i})\cup c(P_3^{w_i})=\{1\}\cup \{3,6\}\cup \{2,5\}$ and
$c(P_1^{v})\cup c(P_2^{v})\cup c(P_3^{v})=\{2\}\cup \{1,4\}\cup
\{3,5,6\}$, i.e. both $w_i$ and $v$ are safe. We flag $w_i$ and $v$
(if $v$ is not flagged).

\emph{\textbf{Case 3}}: $\pi(w_i)=v_j(1\leq j\leq k-1)$ and there
exists at least one uncolored edge connecting $w_i$ to some subtree
of type $I$. Then we choose one such edge $w_iv$ such that the
height of $v$ is as small as possible. Since $T$ is a $BFS$-tree,
then $h(v)=h(w_i)-1$, $h(w_i)$, or $h(w_i)+1$. We have the following
two facts, which are similar to $Fact$ $1$ and $2$:

\noindent\textbf{Fact $2'$.} If $h(v)=h(w_i)-1$, then $v$ is already
flagged.

If $v\notin A$, then $v$ gets flagged at the very beginning; if $v\in A$,
let $\pi(v)=v_{j'}$, since $1\leq j\leq j'\leq k-1$ and
$h(v)<h(w_i)$, we have already dealt with $v$ according to R1 and
R2, thus $v$ is flagged (note that $e_v$ may be recolored).

\noindent\textbf{Fact $3'$.} If $h(v)=h(w_i)+1$, then $e_v$ is not recolored.

If $v\notin A$, then $e_v$ never gets recolored; if $v\in A$, let
$\pi(v)=v_{j'}$, since $1\leq j'\leq j\leq k-1$ and $h(v)>h(w_i)$,
we have not dealt with $v$ yet according to R1 and R2, thus $e_v$ is
not recolored.

We distinguish three subcases based on the height of $w_i$.

$\ast$ \emph{Subcase 3.1}: $h(w_i)\equiv 0 \ (mod\ 3)$

If $h(v)=h(w_i)-1$, by Fact $2'$ we know that $v$ is already
flagged. No matter whether $e_v$ is recolored or not, we color $w_iv$
with $4$. Then $c(P_1^{w_i})\cup c(P_2^{w_i})\cup
c(P_3^{w_i})=\{2\}\cup \{3,6\}\cup \{1,4,5\}$. Now
$w_i$ becomes safe. We flag $w_i$.

If $h(v)=h(w_i)$, then $v$ may be flagged and $e_v$ may be
recolored. If $e_v$ is not recolored ($c(e_v)=2$), then color $w_iv$
with $5$ and recolor $e_{w_i}$ with $4$. The parent of $w_i$ is
still safe. Now $c(P_1^{p(w_i)})\cup c(P_2^{p(w_i)})\cup
c(P_3^{p(w_i)})=\{3\}\cup \{1,5\}\cup \{4,6\}$. Moreover,
$c(P_1^{w_i})\cup c(P_2^{w_i})\cup c(P_3^{w_i})=\{4\}\cup
\{3,6\}\cup \{2,5\}$ and $c(P_1^{v})\cup c(P_2^{v})\cup
c(P_3^{v})=\{2\}\cup \{3,6\}\cup \{4,5\}$, i.e. both $w_i$ and $v$
are safe. We flag $w_i$ and $v$ (if $v$ is not flagged). If $e_v$ is
recolored ($c(e_v)=4$), it implies $v$ is flagged. Then color $w_iv$
with $5$. Now $c(P_1^{w_i})\cup c(P_2^{w_i})\cup
c(P_3^{w_i})=\{2\}\cup \{3,6\}\cup \{4,5\}$, i.e. $w_i$ becomes
safe. We flag $w_i$.

If $h(v)=h(w_i)+1$, by Fact $3'$ we know that $e_v$ is not recolored
($c(e_v)=1$). Then color $w_iv$ with $5$. Now $c(P_1^{w_i})\cup
c(P_2^{w_i})\cup c(P_3^{w_i})=\{2\}\cup \{3,6\}\cup \{1,5\}$ and
$c(P_1^{v})\cup c(P_2^{v})\cup c(P_3^{v})=\{1\}\cup \{2,4\}\cup
\{3,5,6\}$, i.e. both $w_i$ and $v$ are safe. We flag $w_i$ and $v$
(if $v$ is not flagged).

$\ast$ \emph{Subcase 3.2}: $h(w_i)\equiv 1 \ (mod\ 3)$

If $h(v)=h(w_i)-1$, by Fact $2'$ we know that $v$ is already
flagged. No matter whether $e_v$ is recolored or not, we color $w_iv$
with $5$. Then $c(P_1^{w_i})\cup c(P_2^{w_i})\cup
c(P_3^{w_i})=\{1\}\cup \{2,4\}\cup \{3,5,6\}$. Now
$w_i$ becomes safe. We flag $w_i$.

If $h(v)=h(w_i)$, then $v$ may be flagged and $e_v$ may be
recolored. If $e_v$ is not recolored ($c(e_v)=1$), then $e_v$ never
gets recolored in second step. For $w_i$ not in the first level,
color $w_iv$ with $6$ and recolor $e_{w_i}$ with $5$. The parent of
$w_i$ is still safe. Now $c(P_1^{p(w_i)})\cup c(P_2^{p(w_i)})\cup
c(P_3^{p(w_i)})=\{2\}\cup \{3,6\}\cup \{4,5\}$. Moreover,
$c(P_1^{w_i})\cup c(P_2^{w_i})\cup c(P_3^{w_i})=\{5\}\cup
\{2,4\}\cup \{1,6\}$ and $c(P_1^{v})\cup c(P_2^{v})\cup
c(P_3^{v})=\{1\}\cup \{2,4\}\cup \{5,6\}$, i.e. both $w_i$ and $v$
are safe. We flag $w_i$ and $v$ (if $v$ is not flagged). For $w_i$
in the first level, the parent of $w_i$, namely $v_0$, is already
safe. Its three internally disjoint super-rainbow paths to $D$ are
$P_1^{v_0}=v_0,t(v_0)$, $P_2^{v_0}=v_0,v,t(v)$,
$P_3^{v_0}=v_0,v_k,t(v_k)$, and $c(P_1^{v_0})\cup c(P_2^{v_0})\cup
c(P_3^{v_0})=\{2\}\cup \{1,4\}\cup \{3,5\}$ or $\{2\}\cup
\{1,4\}\cup \{5,6\}$. Since the paths
$P_1^{v_0},P_2^{v_0},P_3^{v_0}$ do not use $e_{w_i}$, we can recolor
$e_{w_i}$ with an arbitrary color from $\{1,2,3,4,5,6\}$. In line
with the previous case, we also color $w_iv$ with $6$ and recolor
$e_{w_i}$ with $5$. Then again both $w_i$ and $v$ are safe. We flag
$w_i$ and $v$ (if $v$ is not flagged). If $e_v$ is recolored
($c(e_v)=5$), it implies $v$ is flagged. Then color $w_iv$ with $6$.
Now $c(P_1^{w_i})\cup c(P_2^{w_i})\cup c(P_3^{w_i})=\{1\}\cup
\{2,4\}\cup \{5,6\}$, i.e. $w_i$ becomes safe. We flag $w_i$.

If $h(v)=h(w_i)+1$, by Fact $3'$ we know that $e_v$ is not recolored
($c(e_v)=3$). Then color $w_iv$ with $6$. Now $c(P_1^{w_i})\cup
c(P_2^{w_i})\cup c(P_3^{w_i})=\{1\}\cup \{2,4\}\cup \{3,6\}$ and
$c(P_1^{v})\cup c(P_2^{v})\cup c(P_3^{v})=\{3\}\cup \{1,5\}\cup
\{2,4,6\}$, i.e. both $w_i$ and $v$ are safe. We flag $w_i$ and $v$
(if $v$ is not flagged).

$\ast$ \emph{Subcase 3.3}: $h(w_i)\equiv 2 \ (mod\ 3)$

If $h(v)=h(w_i)-1$, by Fact $2'$ we know that $v$ is already
flagged. No matter whether $e_v$ is recolored or not, we color $w_iv$
with $6$. Then $c(P_1^{w_i})\cup c(P_2^{w_i})\cup
c(P_3^{w_i})=\{3\}\cup \{1,5\}\cup \{2,4,6\}$. Now
$w_i$ becomes safe. We flag $w_i$.

If $h(v)=h(w_i)$, then $v$ may be flagged and $e_v$ may be
recolored. If $e_v$ is not recolored ($c(e_v)=3$), then color $w_iv$
with $4$ and recolor $e_{w_i}$ with $6$. The parent of $w_i$ is
still safe. Now $c(P_1^{p(w_i)})\cup c(P_2^{p(w_i)})\cup
c(P_3^{p(w_i)})=\{1\}\cup \{2,4\}\cup \{5,6\}$. Moreover,
$c(P_1^{w_i})\cup c(P_2^{w_i})\cup c(P_3^{w_i})=\{6\}\cup
\{1,5\}\cup \{3,4\}$ and $c(P_1^{v})\cup c(P_2^{v})\cup
c(P_3^{v})=\{3\}\cup \{1,5\}\cup \{4,6\}$, i.e. both $w_i$ and $v$
are safe. We flag $w_i$ and $v$ (if $v$ is not flagged). If $e_v$ is
recolored ($c(e_v)=6$), it implies $v$ is flagged. Then color $w_iv$
with $4$. Now $c(P_1^{w_i})\cup c(P_2^{w_i})\cup
c(P_3^{w_i})=\{3\}\cup \{1,5\}\cup \{4,6\}$, i.e. $w_i$ becomes
safe. We flag $w_i$.

If $h(v)=h(w_i)+1$, by Fact $3'$ we know that $e_v$ is not recolored
($c(e_v)=2$). Then color $w_iv$ with $4$. Now $c(P_1^{w_i})\cup
c(P_2^{w_i})\cup c(P_3^{w_i})=\{3\}\cup \{1,5\}\cup \{2,4\}$ and
$c(P_1^{v})\cup c(P_2^{v})\cup c(P_3^{v})=\{2\}\cup \{3,6\}\cup
\{1,4,5\}$, i.e. both $w_i$ and $v$ are safe. We flag $w_i$ and $v$
(if $v$ is not flagged).

\emph{\textbf{Case 4}}: $\pi(w_i)=v_j(1\leq j\leq k-1)$ and all the
uncolored edges connect $w_i$ to the subtree of type
$\uppercase\expandafter{\romannumeral2}$. Then we choose one such
edge $w_iv$ such that the height of $v$ is as small as possible.
Since $T$ is a $BFS$-tree and the subtree of $w_i$ is to the left of
the subtree of $v$, then $h(v)=h(w_i)-1$ or $h(w_i)$. We have the
following fact:

\noindent\textbf{Fact 4.} $v$ is already flagged.

If $v\notin A$, then $v$ gets flagged at the very beginning; if $v\in A$,
since $\pi(v)=v_k$ and $\pi(w_i)=v_j\ (1\leq j\leq k-1)$, we have
already dealt with $v$ according to R1, thus $v$ is flagged (note
that $e_v$ may be recolored).

We distinguish three subcases based on the height of $w_i$.

$\ast$ \emph{Subcase 4.1}: $h(w_i)\equiv 0 \ (mod\ 3)$

If $h(v)=h(w_i)-1$, by Fact $4$ we know that $v$ is already flagged.
If $e_v$ is not recolored ($c(e_v)=1$), then color $w_iv$ with $5$.
We have $c(P_1^{w_i})\cup c(P_2^{w_i})\cup c(P_3^{w_i})=\{2\}\cup
\{3,6\}\cup \{1,5\}$. If $e_v$ is recolored ($c(e_v)=4$), then color
$w_iv$ with $5$. We have $c(P_1^{w_i})\cup c(P_2^{w_i})\cup
c(P_3^{w_i})=\{2\}\cup \{3,6\}\cup \{4,5\}$. Now $w_i$ becomes safe.
We flag $w_i$.

If $h(v)=h(w_i)$, by Fact $4$ we know that $v$ is already flagged.
If $e_v$ is not recolored ($c(e_v)=2$), then color $w_iv$ with $5$.
We have $c(P_1^{w_i})\cup c(P_2^{w_i})\cup c(P_3^{w_i})=\{2\}\cup
\{3,6\}\cup \{1,4,5\}$. If $e_v$ is recolored ($c(e_v)=5$), then
color $w_iv$ with $4$. We have $c(P_1^{w_i})\cup c(P_2^{w_i})\cup
c(P_3^{w_i})=\{2\}\cup \{3,6\}\cup \{4,5\}$. Now $w_i$ becomes safe.
We flag $w_i$.

$\ast$ \emph{Subcase 4.2}: $h(w_i)\equiv 1 \ (mod\ 3)$

If $h(v)=h(w_i)-1$, by Fact $4$ we know that $v$ is already flagged.
If $e_v$ is not recolored ($c(e_v)=2$), then color $w_iv$ with $5$.
We have $c(P_1^{w_i})\cup c(P_2^{w_i})\cup c(P_3^{w_i})=\{1\}\cup
\{3,4,6\}\cup \{2,5\}$. If $e_v$ is recolored ($c(e_v)=5$), then
color $w_iv$ with $6$. We have $c(P_1^{w_i})\cup c(P_2^{w_i})\cup
c(P_3^{w_i})=\{1\}\cup \{2,4\}\cup \{5,6\}$. Now $w_i$ becomes safe.
We flag $w_i$.

If $h(v)=h(w_i)$, by Fact $4$ we know that $v$ is already flagged.
If $e_v$ is not recolored ($c(e_v)=3$), then color $w_iv$ with $6$.
We have $c(P_1^{w_i})\cup c(P_2^{w_i})\cup c(P_3^{w_i})=\{1\}\cup
\{2,4\}\cup \{3,6\}$. If $e_v$ is recolored ($c(e_v)=6$), then color
$w_iv$ with $3$. We have $c(P_1^{w_i})\cup c(P_2^{w_i})\cup
c(P_3^{w_i})=\{1\}\cup \{2,4\}\cup \{3,6\}$. Now $w_i$ becomes safe.
We flag $w_i$.

$\ast$ \emph{Subcase 4.3}: $h(w_i)\equiv 2 \ (mod\ 3)$

If $h(v)=h(w_i)-1$, by Fact $4$ we know that $v$ is already flagged.
If $e_v$ is not recolored ($c(e_v)=3$), then color $w_iv$ with $4$
and recolor $e_{w_i}$ with $6$. The parent of $w_i$ is still safe.
Now $c(P_1^{p(w_i)})\cup c(P_2^{p(w_i)})\cup
c(P_3^{p(w_i)})=\{1\}\cup \{2,4\}\cup \{5,6\}$. Moreover,
$c(P_1^{w_i})\cup c(P_2^{w_i})\cup c(P_3^{w_i})=\{6\}\cup
\{1,5\}\cup \{3,4\}$ , i.e. $w_i$ is safe. We flag $w_i$. If $e_v$
is recolored ($c(e_v)=6$), then color $w_iv$ with $4$. Now
$c(P_1^{w_i})\cup c(P_2^{w_i})\cup c(P_3^{w_i})=\{3\}\cup
\{1,5\}\cup \{4,6\}$, i.e. $w_i$ becomes safe. We flag $w_i$.

If $h(v)=h(w_i)$, by Fact $4$ we know that $v$ is already flagged.
If $e_v$ is not recolored ($c(e_v)=1$), then color $w_iv$ with $3$
and recolor $e_{w_i}$ with $6$. The parent of $w_i$ is still safe.
Now $c(P_1^{p(w_i)})\cup c(P_2^{p(w_i)})\cup
c(P_3^{p(w_i)})=\{1\}\cup \{2,4\}\cup \{5,6\}$. Moreover,
$c(P_1^{w_i})\cup c(P_2^{w_i})\cup c(P_3^{w_i})=\{6\}\cup
\{2,4,5\}\cup \{1,3\}$, i.e. $w_i$ is safe. We flag $w_i$. If $e_v$
is recolored ($c(e_v)=4$), then color $w_iv$ with $6$. Now
$c(P_1^{w_i})\cup c(P_2^{w_i})\cup c(P_3^{w_i})=\{3\}\cup
\{1,5\}\cup \{4,6\}$, i.e. $w_i$ becomes safe. We flag $w_i$.

Then we go to $w_{i+1}$ and repeat the process, until all the
vertices in $A$ are visited. We do the same operation to all
$C_{i}$$'s$. If there still exist uncolored edges in
$E[D,\overline{D}]\cup E(G[\overline{D}])$, color them with $1$. Now
we have a coloring of all the edges in $E[D,\overline{D}]\cup
E(G[\overline{D}])$ using six different colors from
$\{1,2,3,4,5,6\}$ such that all the vertices in $\overline{D}$ are
safe.

\subsection{Color the edges in $E(G[D])$}
Let $d:=rx_3(G[D])$. Then we can color the edges in $G[D]$ with $d$
fresh colors from $\{7,8,\ldots, d+6\}$ such that for each triple of
vertices in $D$, there exists a rainbow tree in $G[D]$ connecting
them. Hereto we obtain an edge-coloring $c: E(G)\rightarrow
\{1,2,\ldots,d+6\}$.

\subsection{Prove $c$ is a 3-rainbow coloring}
Next we will prove that this edge-coloring of $G$ is a 3-rainbow
coloring, which yields that $rx_3(G)\leq rx_3(G[D])+6$.

\begin{claim}
Under this coloring, for any three vertices $u,v,w$ in
$\overline{D}$, there exists a rainbow $u-D$ path $P^u$, a rainbow
$v-D$ path $P^v$ and a rainbow $w-D$ path $P^w$ such that $P^u \cup
P^v \cup P^w$ is also rainbow.
\end{claim}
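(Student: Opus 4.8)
The plan is to forget the paths' internal structure and treat the Claim as a purely color-combinatorial selection problem. Since each of $u,v,w$ is \emph{safe}, every $x\in\{u,v,w\}$ comes with three internally disjoint \emph{super-rainbow} $x$-$D$ paths $P_1^x,P_2^x,P_3^x$; because their union is rainbow, the three color sets $c(P_1^x),c(P_2^x),c(P_3^x)\subseteq\{1,\dots,6\}$ are pairwise disjoint. Moreover $P_1^x=x\,t(x)$ is always a single leg, so each vertex owns a \emph{singleton} part, whose color I will call $a_x$. Note also that for the union $P^u\cup P^v\cup P^w$ to be rainbow it suffices that the three chosen color sets be pairwise disjoint: if two chosen paths happen to share an actual edge, that edge contributes a single color and causes no conflict. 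Thus I only need to pick one part from each of the three disjoint partitions so that the three picked parts are pairwise disjoint subsets of $\{1,\dots,6\}$.

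First I would dispose of the generic case by looking at the leg colors $a_u,a_v,a_w$. If they are not all equal, then two of them, say $a_u\ne a_v$, are distinct; assign $u$ and $v$ their legs, using up the two colors $a_u,a_v$. The third vertex $w$ has three pairwise disjoint parts, and the two forbidden colors can lie in at most two of them, so at least one part of $w$ avoids $\{a_u,a_v\}$; choosing it finishes this case. (The same counting shows that for \emph{any} two vertices a color-disjoint pair of paths always exists, since if every part of one met every part of the other, the internal disjointness of the parts would force more than six colors.)

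The hard part will be the case $a_u=a_v=a_w=a$, where at most one of the three legs may be used. Here I would keep one leg, say for $w$, and must then route $u$ and $v$ into \emph{non-leg} parts that are disjoint from each other (they automatically avoid $a$). This reduces to finding disjoint parts $A\in\{c(P_2^u),c(P_3^u)\}$ and $B\in\{c(P_2^v),c(P_3^v)\}$. If either vertex has a non-leg \emph{singleton} $\{b\}$ (these do occur, e.g.\ the partition $\{1,2,36\}$), the other vertex has two disjoint parts, at most one containing $b$, so a disjoint partner always exists. Otherwise every non-leg part carries a color from $\{4,5,6\}$, and the two non-leg parts of a single vertex, being disjoint, carry \emph{different} such colors; since the leg color and these high colors are tied to the height-mod-$3$/type pattern of the first-step coloring, two vertices sharing a leg color have compatible high-color patterns, and I expect a disjoint cross-pair always to be available. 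Making this last step rigorous is exactly the main obstacle: abstract counting alone does \emph{not} suffice (one can write two disjoint pairs of $2$-sets inside five colors, say $\{2,3\},\{4,5\}$ against $\{2,4\},\{3,5\}$, that pairwise cross), so I would fall back on the explicit finite list of color-partitions produced in the first and second steps of Section 4.1 and verify, over all pairs with equal leg color, that a disjoint non-leg selection always exists.

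Finally, once the Claim is in hand it immediately yields a rainbow $\{u,v,w\}$-tree for the main theorem: the selected paths $P^u\cup P^v\cup P^w$ form a rainbow subgraph colored within $\{1,\dots,6\}$ and reach three feet $t_u,t_v,t_w\in D$, to which I append a rainbow tree of $G[D]$ connecting $t_u,t_v,t_w$ drawn in the fresh colors $\{7,\dots,rx_3(G[D])+6\}$. The two color ranges are disjoint, so the whole subgraph is rainbow and contains $u,v,w$, which completes the argument that $c$ is a $3$-rainbow coloring and hence that $rx_3(G)\leq rx_3(G[D])+6$.
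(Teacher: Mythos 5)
Your reduction is exactly the paper's: safety of $u,v,w$ turns the Claim into a selection problem over three partitions of subsets of $\{1,\dots,6\}$ into pairwise-disjoint parts, and your two cases correspond precisely to the paper's necessary-and-sufficient condition — (C1): the leg colors $c(P_1^u),c(P_1^v),c(P_1^w)$ are not all equal, or (C2): some two of the vertices admit color-disjoint non-leg parts. Your counting argument for the not-all-equal case (two distinct forbidden colors can hit at most two of the third vertex's three disjoint parts) and your treatment of a non-leg singleton part both match the paper's arguments, and your closing paragraph deriving a rainbow $S$-tree from the Claim is the same derivation the paper gives before proving the Claim.

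The gap is that the decisive step is only announced, never executed. You correctly observe that abstract counting cannot settle the all-legs-equal case (your obstruction $\{2,3\},\{4,5\}$ against $\{2,4\},\{3,5\}$ is exactly right; the paper exhibits the analogous triple $\{1,24,56\},\{1,25,46\},\{1,26,45\}$), so the entire content of the Claim is the verification that the coloring of Steps 1 and 2 produces only ``good'' partitions. The paper carries this out: it consolidates every color partition arising in Subcases 1.1--4.3 into seven explicit classes grouped by leg color (Classes 0--6, at most nine partitions each) and checks (C2) pairwise within each class. You never assemble that list, and the one structural shortcut you attempt is actually false: the assertion that, absent non-leg singletons, ``every non-leg part carries a color from $\{4,5,6\}$'' fails for the partition $\{6,245,13\}$ in Class 6, whose part $\{1,3\}$ contains no high color; and ``I expect a disjoint cross-pair always to be available'' is an expectation, not an argument. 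So your plan is the right one — indeed it is the paper's plan — but as written the proof stops exactly where the real work of the Claim begins, and the pattern-based reasoning you offer in place of the enumeration would not survive scrutiny.
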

Before giving the proof of Claim 1, let us show how it implies our
result. Let $S=\{u,v,w\}\subseteq V(G)$. If $|S\cap D|=3$, i.e.
$(u,v,w)\in D\times D \times D$, then there is already a rainbow
$S$-tree in $G[D]$. If $|S\cap D|=2$, say $(u,v,w)\in D\times
D\times \overline{D}$, then let $w'$ be the foot of $w$. The rainbow
tree in $G[D]$ connecting $u,v,w'$ together with the edge $ww'$
forms a rainbow $S$-tree. If $|S\cap D|=1$, say $(u,v,w)\in D \times
\overline{D} \times \overline{D}$, then by $Claim\ 1$, there exists
a rainbow $v-D$ path $P^v$ and a rainbow $w-D$ path $P^w$ such that
$P^v \cup P^w$ is also rainbow. Assume the endvertex of $P^v$, $P^w$
in $D$ is $v'$, $ w'$ respectively. Then the rainbow tree in $G[D]$
connecting $u,v',w'$ together with the paths $P^v$ and $P^w$ forms a
connected rainbow subgraph of $G$, denoted by $H$. Obviously, a
spanning tree of $H$ is a rainbow $S$-tree. If $|S\cap D|=0$, i.e.
$(u,v,w)\in \overline{D} \times \overline{D}\times \overline{D}$,
then by $Claim\ 1$, there exists a rainbow $u-D$ path $P^u$, a
rainbow $v-D$ path $P^v$ and a rainbow $w-D$ path $P^w$ such that
$P^u\cup P^v \cup P^w$ is also rainbow. Assume the endvertex of
$P^u$, $P^v$, $P^w$ in $D$ is $u'$, $v'$, $ w'$ respectively. Then
the rainbow tree in $G[D]$ connecting $u',v',w'$ together with the
paths $P^u$, $P^v$ and $P^w$ forms a connected rainbow subgraph of
$G$, denoted by $H'$. Obviously, a spanning tree of $H'$ is a
rainbow $S$-tree. So we come to the conclusion that the
edge-coloring $c$ is a 3-rainbow coloring.

\noindent\emph{Proof of Claim 1}: For any three vertices $u,v,w$ in
$\overline{D}$, $u,v,w$ are safe under this coloring. That is, there
exist three internally-disjoint super-rainbow $u-D$ paths $P^u_1$,
$P^u_2$, $P^u_3$, three internally-disjoint super-rainbow $v-D$
paths $P^v_1$, $P^v_2$, $P^v_3$ and three internally-disjoint
super-rainbow $w-D$ paths $P^w_1$, $P^w_2$, $P^w_3$. If we can pick
out $P_i^u$, $P_j^v$ and $P_k^w$ $(1\leq i,j,k\leq 3)$ from these
paths satisfying $P_i^u\cup P_j^v \cup P_k^w$ is also rainbow, we
are done. But unfortunately in some cases, we can not do that. For
example, if $c(P_1^u)\cup c(P_2^u) \cup c(P_3^u)= \{1\} \cup \{2,4\}
\cup \{5,6\}$, $c(P_1^v)\cup c(P_2^v) \cup c(P_3^v)= \{1\} \cup
\{2,5\} \cup \{4,6\}$, $c(P_1^w)\cup c(P_2^w) \cup c(P_3^w)= \{1\}
\cup \{2,6\} \cup \{4,5\}$, then one can check that $P_i^u\cup P_j^v
\cup P_k^w$ is not rainbow for each $1\leq i,j,k\leq 3$. Here we
will show a sufficient and necessary condition for the situation in
which we can pick out suitable $P_i^u$, $P_j^v$ and $P_k^w$. (Note
that $P_1^u$, $P_1^v$, $P_1^w$ contains exactly one edge.)

There exist $i,j,k\in\{1,2,3\}$ satisfying $P_i^u\cup P_j^v \cup
P_k^w$ is rainbow if and only if

(C1) $c(P_1^u)$, $c(P_1^v)$, $c(P_1^w)$ are not the same or

(C2) there exist two distinct vertices $x,y\in \{u,v,w\}$ and two
integers $s,t\in\{2,3\}$ ($s$ may equal to $t$) such that
$c(P_s^x)\cap c(P_t^y)=\emptyset$.

If (C1) is true, without loss of generality, we assume $c(P_1^u)=1$
and $c(P_1^v)=2$ . If $c(P_1^w)\in\{3,4,5,6\}$, then $P_1^u\cup
P_1^v \cup P_1^w$ is rainbow; If $c(P_1^w)\in\{1,2\}$, without loss
of generality let $c(P_1^w)=1$. Since $P_1^w\cup P_2^w \cup P_3^w$
is rainbow, both $P_2^w$ and $P_3^w$ contain no edges colored by 1,
and at least one of $P_2^w$ and $P_3^w$ contains no edges colored by
2, say $P_2^w$. Then $P_1^u\cup P_1^v \cup P_2^w$ is rainbow. If
(C2) is true, without loss of generality, we assume that
$c(P_2^u)\cap c(P_2^v)=\emptyset$. If $c(P_1^u)$, $c(P_1^v)$,
$c(P_1^w)$ are not the same, then the assertion holds by (C1);
otherwise, without loss of generality let
$c(P_1^u)=c(P_1^v)=c(P_1^w)=1$. Then $P_2^u$ and $P_2^v$ contain no
edges colored by 1. Bearing in mind that $c(P_2^u)\cap
c(P_2^v)=\emptyset$, we get that $P_1^w\cup P_2^u \cup P_2^v$ is
rainbow. For the other direction, assume that (C1) is not true, we
will show (C2) holds by contradiction. Suppose that
$c(P_1^u)=c(P_1^v)=c(P_1^w)$, and for any two distinct vertices
$x,y\in \{u,v,w\}$ and any two integers $s,t\in\{2,3\}$,
$c(P_s^x)\cap c(P_t^y)\neq \emptyset$. Since $P_i^u\cup P_j^v \cup
P_k^w$ is rainbow, we know that at most one of $i,j,k$ is equal to
1, say $j,k\in\{2,3\}$. Then by hypothesis, $c(P_j^v)\cap
c(P_k^w)\neq \emptyset$, a contradiction to the fact that $P_i^u\cup
P_j^v \cup P_k^w$ is rainbow.

From the above assertion, we can see that the colors of the three
internally disjoint super-rainbow paths connecting a vertex in
$\overline{D}$ to $D$ plays a crucial role. Here we list out all the
possible color sets of these paths under this coloring. For the sake
of brevity, we write $\{1,24,35\}$ instead of $c(P_1^u)\cup
c(P_2^u)\cup c(P_3^u)=\{1\}\cup \{2,4\} \cup \{3,5\}$ and
$c(P_1^u)\cup c(P_2^u)\cup c(P_3^u)=\{1\}\cup \{3,5\} \cup \{2,4\}$.
\\

\emph{Class 0}: $\{1,2,3\}$, ~~$\{1,2,34\}$, ~~$\{1,2,36\}$, ~~$\{2,3,14\}$, ~~$\{2,3,15\}$,

 ~~~~~~~~~~~~$\{1,3,24\}$, ~~$\{1,3,25\}$

\emph{Class 1}: $\{1,24,35\}$, ~~$\{1,36,24\}$, ~~$\{1,36,25\}$, ~~$\{1,24,56\}$, ~~$\{1,36,45\}$,

~~~~~~~~~~~~$\{1,36,245\}$, ~~$\{1,24,356\}$, ~~$\{1,346,25\}$.

\emph{Class 2}: $\{2,36,14\}$, ~~$\{2,14,35\}$, ~~$\{2,14,56\}$, ~~$\{2,36,15\}$, ~~$\{2,36,45\}$,

~~~~~~~~~~~~$\{2,46,35\}$, ~~$\{2,36,145\}$, ~~$\{2,14,356\}$, ~~$\{2,346,15\}$.

\emph{Class 3}: $\{3,15,26\}$, ~~$\{3,25,16\}$, ~~$\{3,15,46\}$, ~~$\{3,25,46\}$, ~~$\{3,15,24\}$,

~~~~~~~~~~~~$\{3,25,14\}$, ~~$\{3,25,146\}$, ~~$\{3,15,246\}$.

\emph{Class 4}: $\{4,36,15\}$, ~~$\{4,36,25\}$, ~~$\{4,36,125\}$.

\emph{Class 5}: $\{5,14,26\}$, ~~$\{5,24,16\}$.

\emph{Class 6}: $\{6,25,34\}$, ~~$\{6,15,34\}$, ~~$\{6,15,24\}$, ~~$\{6,245,13\}$.

For every triple $\{u,v,w\}$ of vertices in $\overline{D}$, if
$c(P_1^u)$, $c(P_1^v)$ and $c(P_1^w)$ are not the same, we are done.
Now suppose $c(P_1^u)=c(P_1^v)=c(P_1^w)$. If there exists one vertex
satisfying at least two of its three paths are of length 1, without
loss of generality, we assume that $c(P_1^u)=c(P_1^v)=c(P_1^w)=1$,
$P_2^u$ is of length 1, and $c(P_2^u)=2$. Since $P_1^v\cup P_2^v\cup
P_3^v$ is rainbow, we can find out one path, say $P_2^v$, which
contains no edges colored by 1 or 2. Then $P_2^u \cup P_2^v \cup
P_1^w$ is rainbow. Again we are done. Thus to prove $Claim \ 1$, it
suffices to check whether (C2) holds for every three color sets in
Class $i$ ($1\leq i\leq 6$). Since the number of color sets in one
class is no more than 9, the checking work can be done in a short
time and the answer in turn is affirmative. We complete the proof of
$Claim \ 1$.

To end the section, we illustrate the tightness of the bound
$rx_3(G)\leq rx_3(G[D])+6$ with the graph in $Figure \ 5$. It is
easy to see that $D=\{v_0\}$ is a connected three-way dominating
set. By Theorem \ref{thm3}, $rx_3(G)\leq rx_3(G[D])+6=6$. On the
other hand, we have already proved that $rx_3(G)=6$. So the bound is
tight.

\section{Concluding remarks}
To sum up, as for the 3-rainbow index of a graph, we can consider
the following three strengthened connected dominating sets:

Let $G$ be a connected graph and $D$ be a connected dominating set
of $G$.

(a) if every vertex in $\overline{D}$ is adjacent to at least three
distinct vertices of $D$, then $rx_3(G)\leq rx_3(G[D])+3$ (Theorem
\ref{thm4});

(b) if every vertex in $\overline{D}$ is of degree at least three
and adjacent to at least two distinct vertices of $D$, then
$rx_3(G)\leq rx_3(G[D])+4$ (Theorem \ref{thm2});

(c) if every vertex in $\overline{D}$ is of degree at least three,
then $rx_3(G)\leq rx_3(G[D])+6$ (Theorem \ref{thm3}).

From (a) to (c), we loosen the restrictions on the connected
dominating sets, while the additive constant increases. We cannot
tell which bound is the best. For example, for a French Windmill in
Figure 5, (c) is better than (a) and (b), whereas for a threshold
graph with $\delta\geq3$, (a) and (b) which imply $rx_3(G)\leq5$ are
better than (c) which implies $rx_3(G)\leq6$. Given a connected
graph $G$, we can calculate three upper bounds for the 3-rainbow
index of $G$ using (a), (b), (c) respectively (some of them may be
the same), and then choose the smallest one of them.

\end{document}